\newcommand{\ds}{\displaystyle}
\newcommand{\LLL}{L}
\newcommand{\rrvert}{\vert}
\newcommand{\rrVert}{\Vert}
\newcommand{\llvert}{\vert}
\newcommand{\llVert}{\Vert}
\newtheorem{lem}[thm]{Lemma}
\begin{document}
\begin{frontmatter}

\title{$L_2$-variation of L\'evy driven BSDEs with non-smooth terminal
conditions}
\runtitle{$L_2$-variation of L\'evy driven BSDEs}

\begin{aug}
\author[A]{\inits{C.}\fnms{Christel}~\snm{Geiss}\corref{}\thanksref{A}\ead[label=e1]{christel.geiss@jyu.fi}}
\and
\author[B]{\inits{A.}\fnms{Alexander}~\snm{Steinicke}\thanksref{B}\ead[label=e2]{alexander.steinicke@uibk.ac.at}}
\address[A]{Department of Mathematics and Statistics, University of Jyv\"{a}skyl\"{a},  FI-40014, Finland.\\  \printead{e1}}
\address[B]{Department of Mathematics, University of Innsbruck, 6020 Innsbruck, Austria.\\ \printead{e2}}
\end{aug}

%
\received{\smonth{12} \syear{2012}}
%
\revised{\smonth{10} \syear{2014}}

%
\begin{abstract}
We consider the $\LLL_2$-regularity of solutions to backward stochastic
differential equations (BSDEs) with Lipschitz
generators driven by a Brownian motion and a Poisson random measure
associated with a L\'evy process $(X_t)_{t \in[0,T]}$. The terminal
condition may be a Borel function of finitely many increments
of the L\'evy process which is not necessarily Lipschitz but only
satisfies a fractional smoothness condition.
The results are obtained by investigating how the special structure
appearing in the chaos expansion of the terminal condition
is inherited by the solution to the BSDE.
\end{abstract}

%
\begin{keyword}
\kwd{backward stochastic differential equations}
\kwd{Chaos expansion}
\kwd{$L_2$-regularity}
\kwd{L\'evy processes}
\kwd{Malliavin calculus}
\end{keyword}
\end{frontmatter}

\section{Introduction}\label{sec1}

The main objective of this paper consists in studying the relation
between fractional smoothness of the terminal condition of a BSDE and
the $\LLL_2$-variation
of its according solution.

A motivation to investigate this relation arises from the fact that the
convergence rate of the discretization error of BSDEs with Lipschitz
generator is determined
by the convergence of the discretized terminal condition to its limit
and by the $\LLL_2$-variation properties of the exact solution
$(Y,Z)$.

In the Brownian scenario, the discretization of BSDEs has been studied
by many authors, see, for example,
\cite{BallyPages,MaZhang,Zhang,BouchardTouzi,GobetLemorWarin,BouchardElieTouzi,GGG}.
If the BSDE is given by
\[
Y_t=\xi+\int_t^T F
(s,Y_{s}, Z_s)\,\mathrm{d}s-\int_t^TZ_s\,\mathrm{d}W_s,\qquad
0\le t \le T,
\]
we define the $\LLL_p$-variation
\[
\operatorname{var}_p (\xi,F,\tau) := \sup_{1 \le i \le n} \sup
_{t_{i-1}< s \le t_i} \| Y_s - Y_{t_{i-1}} \|_p
+ \Biggl( \sum_{i=1}^{n} \int
_{t_{i-1}}^{t_i} \| Z_t -
\hat{Z}_{t_{i-1}
}\|_p^2 \,\mathrm{d}t \Biggr)^{{1}/{2}},
\]
where $\tau=(t_i)_{i=0}^n$ is a deterministic time-net
$0=t_0<\cdots<t_n=T$ and
\[
\hat{Z}_{t_{i-1} } := \frac{1}{t_{i}-t_{i-1}} \mathbb{E} \biggl[ \int
_{t_{i-1}}^{t_{i}} Z_s \,\mathrm{d}s |\mathcal{F}_{t_{i-1}} \biggr].
\]
Gobet and Makhlouf \cite{GobMak} considered $\LLL_2$-regularity of
$(Y,Z)$ for a terminal condition given by
$\xi=g(X_T)$ where $g$ does not need to be
Lipschitz and $X$ denotes the forward process. In \cite{GGG}, the
$\LLL_p$-regularity of $(Y,Z)$ for $p \ge2$ was
shown if the terminal condition depends on the forward process at
finitely many time points,
$\xi=g(X_{r_1},\ldots,X_{r_m})$, and satisfies a path-dependent Malliavin
fractional smoothness condition which is weaker than
the Lipschitz condition on $g$. Using these results and following the
ideas of \cite{BouchardTouzi}, one can show that
the convergence rate of the error between the discretizations $(Y^\tau,
Z^\tau)$ and the solution $(Y,Z)$ is of order $\frac{1}{2}$, that is
\[
\operatorname{Err}_{\tau,2}(Y,Z) := \biggl\{ \sup_{0 < t \le T } \mathbb{E}\bigl|
Y_t-Y_t^\tau\bigr|^2 + \int
_0^T \mathbb{E}\bigl|Z_t-Z_t^\tau\bigr|^2
\,\mathrm{d}t \biggr\}^{{1}/{2}} \le c |\tau |^{{1}/{2}}
\]
provided that the time grid for the discretization is chosen in an
appropriate way (like in \cite{GGG}), and the discretized terminal
condition converges in this order. Without any assumptions on the
dependence of the terminal condition $\xi$ on a forward
process $X$, Hu, Nualart and Song \cite{HuNuSong} have
shown the convergence rate $\frac{1}{2}$
supposing Malliavin differentiability properties of $\xi$ (and of the
generator).

For a BSDE driven by a Poisson random measure, Bouchard and Elie
\cite{BouchardElie} proved that the convergence rate is of order
$\frac{1}{2}$ (in the non-degenerate case) if the terminal condition
is given by $\xi=g(X_T)$ where $g$ is Lipschitz.

Here we study whether the $\LLL_2$-variation would allow to
achieve the
convergence rate $\frac{1}{2}$ with a terminal condition $\xi
=g(X_{r_1},\ldots,X_{r_m})$
and whether the Lipschitz condition on
$g$ can be weakened to a Malliavin fractional smoothness condition. The
method we use allows to answer this question in the case where
$X$ is the L\'evy process itself.

In the Brownian setting, in case of a zero generator it is stated in
\cite{GeissHujo}, relation (8), that the rate~$\frac{1}{2}$ is the
best possible
as long as $\xi$ can not be represented as a linear function of $W_T$.
Moreover, in \cite{GeissHujo}, Theorem 3.5, it is shown that for
equidistant grids there is a direct connection between the convergence
rate and the index of fractional smoothness of the terminal condition.
In \cite{GGL}, Theorems 5 and 6, the same
results are recovered for $W$ replaced by a square integrable L\'evy
process $X$, even if the L\'evy process does not have a Brownian part.

The paper is organised as follows. In Section~\ref{sec2}, we describe the
setting and recall some needed facts. In Section~\ref{sec3} we recall some basic
facts about Malliavin
calculus in the L\'evy setting. Furthermore, we state a result about
Malliavin differentiability
of the solution $(Y,Z)$ to a BSDE.
Our method to show the $\LLL_2$-regularity of solutions
to BSDEs exploits the fact that their Malliavin derivative is
piece-wise constant in time. This is ensured by selecting a terminal
condition which has this property. For this purpose, we introduce a
space of
suitable terminal conditions and investigate the chaos expansion of the
according solution in Section~\ref{sec4}. Section~\ref{sec5} contains our main
result, equivalences and implications concerning the $\LLL_2$-regularity
of $(Y,Z)$. An important fact, which will be considered in
Section~\ref{sec6}, is a \mbox{sufficient} condition for the $\LLL_2$-regularity
of the solution:
a certain Malliavin fractional smoothness of the terminal condition
(in addition to our standing assumption that the generator is Lipschitz).

\section{Setting}\label{sec2}

Let $X= (X_t )_{t\in{[0,T]}}$ be an $\LLL_2$-L\'evy-process on
a complete probability space $(\Omega,\mathcal{F},\mathbb{P})$
with L\'evy-measure $\nu$. We will denote the augmented natural
filtration of $X$ by
$\mathbb{F}:= ({\mathcal{F}_t} )_{t\in{[0,T]}}$ and let
$\LLL_2
:= \LLL_2 (\Omega,\mathcal{F}_T,\mathbb{P} )$.

The L\'evy--It\^o decomposition of an $\LLL_2$-L\'evy-process $X$
can be
written as
%
\begin{equation}
\label{levy}
X_t = \gamma t + \sigma W_t + \int
_{]0,t]\times (\mathbb{R}\setminus\{0\}
 )} x\tilde{N}(\mathrm{d}s,\mathrm{d}x),
\end{equation}
where $\sigma\geq0$, $W$ is a Brownian motion and $\tilde{N}$ is the
compensated Poisson random measure corresponding to $X$.

We define the random measure $M$ by
%
\begin{equation}
\label{measureM}
M(\mathrm{d}t,\mathrm{d}x):=\sigma \,\mathrm{d}W_t\delta_0(\mathrm{d}x)+x\tilde
N(\mathrm{d}t,\mathrm{d}x).
\end{equation}
Then $\mathbb{E}M(B)^2 = \int_B\mathbh{m}(\mathrm{d}t,\mathrm{d}x)$ for $B \in
\mathcal{B}([0,T]\times\mathbb{R})$
where
\[
\mathbh{m}(\mathrm{d}t,\mathrm{d}x) =(\lambda\otimes\mu) (\mathrm{d}t,\mathrm{d}x)
\]
with
\[
\mu(\mathrm{d}x)=\sigma^2\delta_0(\mathrm{d}x)+x^2\nu(\mathrm{d}x)
\]
and $\lambda$ denotes the Lebesgue measure.
For $\ 0\leq t\leq T$, we consider the BSDE
%
\begin{equation}
\label{beq}
Y_t=\xi+\int_t^T F
(s, Y_{s}, \bar{Z}_s )\,\mathrm{d}s-\int_{]t,T]\times\mathbb{R}}Z_{s,x}M(\mathrm{d}s,\mathrm{d}x),
\end{equation}
where
\[
\bar{Z}_s=\int_{\mathbb{R}}Z_{s,x}\kappa(\mathrm{d}x)
\]
and
$\kappa(\mathrm{d}x) := \kappa'(x) \mu(\mathrm{d}x)$ such that $\kappa'\in\LLL_2(\mathbb{R}, \mathcal{B}(\mathbb{R}),\mu)$. We will use the
following assumptions:
\begin{enumerate}[($A_F$)]
\item[($A_{\xi}$)] $\xi\in\LLL_2$,
\item[($A_F$)] $\!F\dvtx \Omega\times{[0,T]}\times\mathbb{R}^2 \to\mathbb{R}$ is
jointly measurable,
adapted to $(\mathcal{F}_t)_{t\in{[0,T]}}$,
Lipschitz-continuous in the last two variables, uniformly in $\omega$
and $t$, that is,
%
\begin{equation}
\label{Lipschitzcondition2}
\bigl|F(t,y_1,z_1)-F(t,y_2,z_2)\bigr|
\leq L_f \bigl(|y_1-y_2|+|z_1-z_2|
\bigr),
\end{equation}
and satisfies
\[
\mathbb{E}\int_0^T\bigl\llvert F(t,0,0)\bigr
\rrvert ^2\,\mathrm{d}t<\infty.
\]
\end{enumerate}

For later use, we introduce spaces of stochastic processes.

\begin{definition}\label{def21}
1. Let $S$ denote the space of all $(\mathcal
{F}_t)$-adapted and c\`adl\`ag processes $(y_t)_{0\le t\le T}$ such that
\[
\llVert y\rrVert _{S}^2:=\mathbb{E}\sup
_{0\leq t\leq T} \llvert y_{t}\rrvert ^2 <\infty.
\]
2. We define $H$ as the space of all random fields $z\dvtx \Omega
\times [0,T] \times\mathbb{R}\rightarrow\mathbb{R}$ which are
measurable with respect to
$\mathcal{P}\otimes\mathcal{B}(\mathbb{R})$ (where $\mathcal{P}$
denotes the
predictable $\sigma$-algebra on $\Omega\times[0,T]$ generated
by the left-continuous $\mathbb{F}$-adapted processes) such that
\[
\llVert z\rrVert _{H}^2:=\mathbb{E}\int
_{[0,T]\times\mathbb{R}}\llvert z_{s,x}\rrvert ^2
\mathbh{m} (\mathrm{d}s,\mathrm{d}x)<\infty.
\]
The space $S\times H$ is equipped with the norm $\|(y,z) \|_{S\times
H}:=  ( \llVert y\rrVert _{S}^2+ \llVert z\rrVert _{H}^2
)^{{1}/{2}}$.
\end{definition}

A pair $(Y,Z) \in S\times H$ which satisfies (\ref{beq}) is called a
solution to the BSDE (\ref{beq}).

\begin{thm} \label{existence}
Assume $(\xi,F)$ satisfies ($A_{\xi}$) and ($A_F$). Then the BSDE
(\ref{beq}) has a unique solution $(Y,Z) \in S\times H$.
\end{thm}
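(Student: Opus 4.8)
The plan is to construct the solution by a Picard/fixed-point argument on $S\times H$ equipped with a weighted norm, exploiting the predictable representation property of the L\'evy process. First I would fix $\beta>0$ and use on $S\times H$ the equivalent norm $\|(y,z)\|_\beta^2:=\E\int_0^T \e^{\beta s}|y_s|^2\,ds+\E\int_{[0,T]\times\R}\e^{\beta s}|z_{s,x}|^2\,\m(ds,dx)$ (retaining, in addition, the supremum bound needed for the $S$-component). Given $(U,V)\in S\times H$, I would observe that by $(A_F)$, the Lipschitz bound \eqref{Lipschitzcondition2} and the Cauchy--Schwarz estimate $|\bar V_s|\le\|\kappa'\|_{\Ltwo(\mu)}\big(\int_\R|V_{s,x}|^2\mu(dx)\big)^{1/2}$, the process $s\mapsto F(s,U_s,\bar V_s)$ lies in $\Ltwo(\Omega\times[0,T])$, so that $\xi+\int_0^T F(s,U_s,\bar V_s)\,ds\in\Ltwo$.

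Then I would set $m_t:=\E[\,\xi+\int_0^T F(s,U_s,\bar V_s)\,ds\mid\mathcal{F}_t\,]$, a square-integrable martingale, and invoke the martingale representation theorem for $X$ — equivalently, every such martingale is a stochastic integral against $\sigma\,dW$ and $\tilde N$, hence against the measure $M$ of \eqref{measureM} — to obtain a unique $Z\in H$ with $m_t=m_0+\int_{]0,t]\times\R}Z_{s,x}M(ds,dx)$. Defining $Y_t:=\E[\,\xi+\int_t^T F(s,U_s,\bar V_s)\,ds\mid\mathcal{F}_t\,]=m_t-\int_{]0,t]\times\R}Z_{s,x}M(ds,dx)$ produces a c\`adl\`ag pair $(Y,Z)$ satisfying \eqref{beq}, and Doob's maximal inequality applied to $m$ shows $Y\in S$. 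This yields a map $\Phi\colon S\times H\to S\times H$, $(U,V)\mapsto(Y,Z)$, whose fixed points are precisely the solutions of \eqref{beq}.

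Next I would show $\Phi$ is a contraction for $\beta$ large. For inputs $(U^i,V^i)$, $i=1,2$, with outputs $(Y^i,Z^i)$, and writing $\delta Y=Y^1-Y^2$, $\delta Z=Z^1-Z^2$, $\delta F_s=F(s,U^1_s,\bar V^1_s)-F(s,U^2_s,\bar V^2_s)$, I would apply It\^o's formula to $\e^{\beta s}|\delta Y_s|^2$, take expectations (the stochastic integral is a genuine martingale by the integrability already secured), and use $|\delta F_s|\le L_f(|\delta U_s|+|\delta\bar V_s|)$ together with $|\delta\bar V_s|\le\|\kappa'\|_{\Ltwo(\mu)}\big(\int_\R|\delta V_{s,x}|^2\mu(dx)\big)^{1/2}$ and Young's inequality, to obtain an estimate of the form
\[
\E\int_0^T \e^{\beta s}\Big(\beta|\delta Y_s|^2+\int_\R|\delta Z_{s,x}|^2\mu(dx)\Big)ds \le C\,\E\int_0^T \e^{\beta s}\Big(|\delta U_s|^2+\int_\R|\delta V_{s,x}|^2\mu(dx)\Big)ds
\]
with $C=C(L_f,\|\kappa'\|_{\Ltwo(\mu)})$. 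Combined with a Doob estimate controlling the $S$-part, choosing $\beta$ large enough makes $\Phi$ strictly contracting, and Banach's fixed-point theorem gives a unique $(Y,Z)\in S\times H$ with $\Phi(Y,Z)=(Y,Z)$, that is, a unique solution of \eqref{beq}.

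The main obstacle is not the contraction bookkeeping but the two structural ingredients it rests on: (i) the predictable representation property with respect to $M$ in the L\'evy setting, which must be recalled or cited; and (ii) the a priori estimate guaranteeing that $\Phi$ preserves $S\times H$, where the jump part of the stochastic integral requires the Burkholder--Davis--Gundy inequality (or Doob applied to $m$) to bound $\E\sup_{0\le t\le T}|Y_t|^2$. The dependence of the generator on $\bar Z$ through the weight $\kappa'$ is harmless: it enters only through the finite constant $\|\kappa'\|_{\Ltwo(\mu)}$, which is absorbed into $C$ and hence into the choice of $\beta$. Overall this is the classical Pardoux--Peng scheme adapted to the jump setting.
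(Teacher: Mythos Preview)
The paper does not supply its own proof of this statement; immediately after the theorem it simply cites Tang and Li \cite[Lemma~2.4]{Tangli} and Barles, Buckdahn and Pardoux \cite[Theorem~2.1]{bbp}. Your Picard/fixed-point scheme with a $\beta$-weighted norm, martingale representation with respect to $M$, and the It\^o-formula contraction estimate is exactly the classical argument carried out in those references (the Pardoux--Peng template transported to the jump setting), so there is nothing to compare.

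One small slip worth flagging: the displayed identity $Y_t = m_t - \int_{]0,t]\times\R} Z_{s,x}\,M(ds,dx)$ is wrong as written, since by your own representation of $m$ that right-hand side collapses to the constant $m_0$. What you intend is $Y_t = m_t - \int_0^t F(s,U_s,\bar V_s)\,ds$, which indeed yields $Y_T=\xi$ and, after substituting the integral representation of $m_t$, the equation \eqref{beq} with $(U,V)$ in the generator slot. This is a bookkeeping typo, not a gap in the argument.
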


This assertion can be found in Tang and Li \cite{Tangli}, Lemma 2.4 and
in Barles, Buckdahn and Pardoux \cite{bbp}, Theorem 2.1.
We next cite the stability result of \cite{bbp} comparing the distance
between solutions to the BSDE (\ref{beq}) with different
terminal conditions and generators.

\begin{thm}[(\cite{bbp}, Proposition 2.2)] \label{continuitythm} Let
$(\xi
,F)$ and $(\xi',F')$ satisfy ($A_\xi$) and ($A_F$). For the
corresponding solutions $(Y,Z)$ and $(Y',Z')$ to (\ref{beq}), it holds
\begin{eqnarray*}
&& \bigl\|Y-Y'\bigr\|_{S}^2+ \bigl\|Z-Z'
\bigr\|_{H}^2
\\
&& \quad \le C \biggl( \bigl\|\xi-\xi'\bigr\|_{\LLL_2}^2 +\int
_0^T\bigl\|F (s, Y_s,
\bar{Z}_s )-F' (s,Y_s,
\bar{Z}_s )\bigr\|_{\LLL_2}^2 \,\mathrm{d}s \biggr),
\end{eqnarray*}
where $C= C(T,L_{F'}, \mu)$.
\end{thm}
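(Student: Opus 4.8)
The plan is to prove the estimate directly, via the classical \emph{a priori} argument for BSDEs with jumps, adapted to the driver $M$ of \eqref{measureM}. Put $\delta Y:=Y-Y'$, $\delta Z:=Z-Z'$, $\delta\xi:=\xi-\xi'$, and $A_s:=F(s,Y_s,\bar{Z}_s)-F'(s,Y'_s,\bar{Z}'_s)$. Subtracting the two versions of \eqref{beq}, the pair $(\delta Y,\delta Z)$ satisfies
\[
\delta Y_t=\delta\xi+\int_t^T A_s\,ds-\int_{{]t,T]}\times\R}\delta Z_{s,x}\,M(ds,dx),
\]
and I would decompose $A_s=\Delta F_s+\big(F'(s,Y_s,\bar{Z}_s)-F'(s,Y'_s,\bar{Z}'_s)\big)$ with $\Delta F_s:=F(s,Y_s,\bar{Z}_s)-F'(s,Y_s,\bar{Z}_s)$, the quantity occurring on the right-hand side of the claim. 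By ($A_F$) the second summand is bounded by $L_{F'}\big(|\delta Y_s|+|\delta\bar{Z}_s|\big)$, and Cauchy--Schwarz together with $\kappa'\in\Ltwo(\R,\mathcal{B}(\R),\mu)$ give $|\delta\bar{Z}_s|^2\le\|\kappa'\|_{\Ltwo(\mu)}^2\int_\R|\delta Z_{s,x}|^2\mu(dx)$.

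First I would apply It\^o's formula to $s\mapsto\e^{\beta s}|\delta Y_s|^2$ for a parameter $\beta>0$ to be chosen later, using the L\'evy--It\^o structure \eqref{levy}--\eqref{measureM}: the predictable quadratic variation of the $M$-integral contributes exactly $\E\int_{{]t,T]}\times\R}\e^{\beta s}|\delta Z_{s,x}|^2\,\m(ds,dx)$, and the remaining stochastic integral has zero expectation once the usual localization is performed to justify its martingale property (this uses $(Y,Z),(Y',Z')\in S\times H$). Taking expectations gives, for every $t\in[0,T]$,
\equa
&& \E\big[\e^{\beta t}|\delta Y_t|^2\big]+\beta\,\E\int_t^T\e^{\beta s}|\delta Y_s|^2\,ds+\E\int_{{]t,T]}\times\R}\e^{\beta s}|\delta Z_{s,x}|^2\,\m(ds,dx)\\
&=& \e^{\beta T}\,\E|\delta\xi|^2+2\,\E\int_t^T\e^{\beta s}\,\delta Y_s\,A_s\,ds.
\tion
On the right I would bound $2\,\delta Y_s\,A_s\le|\delta Y_s|^2+|\Delta F_s|^2+2L_{F'}|\delta Y_s|^2+2L_{F'}|\delta Y_s|\,|\delta\bar{Z}_s|$ and then tune Young's inequality so that $2L_{F'}|\delta Y_s|\,|\delta\bar{Z}_s|\le\half\int_\R|\delta Z_{s,x}|^2\mu(dx)+2L_{F'}^2\|\kappa'\|_{\Ltwo(\mu)}^2|\delta Y_s|^2$. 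The $\delta Z$-term is then absorbed by the left-hand side (coefficient $\half$ remains there), and choosing $\beta=\beta(L_{F'},\mu)$ large enough absorbs all the $|\delta Y_s|^2$-terms into $\beta\,\E\int_t^T\e^{\beta s}|\delta Y_s|^2\,ds$. With some $C_1=C_1(L_{F'},\mu)$ this yields
\[
\sup_{0\le t\le T}\E|\delta Y_t|^2+\E\int_0^T|\delta Y_s|^2\,ds+\|\delta Z\|_H^2\le C_1\Big(\E|\delta\xi|^2+\E\int_0^T|\Delta F_s|^2\,ds\Big).
\]

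It remains to replace $\sup_t\E|\delta Y_t|^2$ by $\|\delta Y\|_S^2=\E\sup_t|\delta Y_t|^2$. For this I would return to the pathwise It\^o identity (set $\beta=0$), take $\sup_{0\le t\le T}$, and control the martingale term $\sup_t\big|\int_{{]t,T]}\times\R}\delta Y_{s-}\delta Z_{s,x}\,M(ds,dx)\big|$ by the Burkholder--Davis--Gundy inequality; factoring $\sup_s|\delta Y_s|$ out of the resulting quadratic-variation term and applying Young's inequality lets a fixed fraction of $\E\sup_s|\delta Y_s|^2$ be moved to the left, the residual being a multiple of $\|\delta Z\|_H^2$, which is already controlled. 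The drift term is handled via $2\int_0^T|\delta Y_s|\,|A_s|\,ds\le\tfrac14\sup_s|\delta Y_s|^2+c(T)\int_0^T|A_s|^2\,ds$ and $\E\int_0^T|A_s|^2\,ds\lesssim\E\int_0^T|\Delta F_s|^2\,ds+L_{F'}^2\big(\E\int_0^T|\delta Y_s|^2\,ds+\|\kappa'\|_{\Ltwo(\mu)}^2\|\delta Z\|_H^2\big)$, again controlled by the previous display. Collecting the bounds gives the claim with $C=C(L_{F'},\mu)$ (where $\|\kappa'\|_{\Ltwo(\mu)}$, fixed in the setting, enters through $\mu$).

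The only genuinely delicate point is the absorption bookkeeping: before moving the $\int|\delta Z|^2\,\m$-term and the $\E\sup|\delta Y|^2$-term to the left, one must check that their coefficients there are strictly smaller than the ones already present, which dictates the tuning of Young's inequality and the size of $\beta$ and makes essential use of the standing hypothesis $\kappa'\in\Ltwo(\R,\mathcal{B}(\R),\mu)$ — this is exactly the origin of the dependence of $C$ on $\mu$. The jump contributions to It\^o's formula, the compensation identity $\E\int|\cdot|^2 N=\E\int|\cdot|^2\nu\,ds$ turning the quadratic variation into an integral against $\m$, and the BDG inequality for the compensated Poisson integral are routine, but the stochastic-integral integrand must be taken in its predictable left-limit version $\delta Y_{s-}$.
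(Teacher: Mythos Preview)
Your argument is correct and is essentially the standard \emph{a priori} estimate for Lipschitz BSDEs with jumps: It\^o's formula applied to $e^{\beta s}|\delta Y_s|^2$, absorption of the $\delta Z$-term via Young's inequality and a suitable choice of $\beta$, followed by the Burkholder--Davis--Gundy upgrade from $\sup_t\E|\delta Y_t|^2$ to $\E\sup_t|\delta Y_t|^2$. The only feature specific to the present setting is the treatment of $\bar Z_s=\int_\R Z_{s,x}\kappa'(x)\mu(dx)$, which you handle correctly via Cauchy--Schwarz and which is exactly where the dependence of $C$ on $\|\kappa'\|_{\Ltwo(\mu)}$ (hence on $\mu$) enters.

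Note, however, that the paper does not give its own proof of this statement: Theorem~\ref{continuitythm} is simply quoted as \cite[Proposition~2.2]{bbp} and used as a black box. So there is no ``paper's proof'' to compare against beyond that citation. Your proposal reproduces the argument that underlies the cited result (the proof in \cite{bbp} follows precisely this route), adapted to the random measure $M$ of \eqref{measureM} and the $\bar Z$-dependence of the generator. In that sense you have supplied what the paper deliberately omitted, and your bookkeeping of the absorption constants and of the left-limit $\delta Y_{s-}$ in the stochastic integrand is appropriate.
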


\begin{rem}\label{rem24}
According to \cite{StrickerYor}, Proposition 3 (see also \cite{Meyer}, Proposition 3  or \cite{Ankirch}, Lemma 2.2) for any $z\in
\LLL_2
(\mathbb{P}\otimes\mathbh{m})$ there exists a process
\[
{}^pz \in\LLL_2 \bigl(\Omega\times{[0,T]}\times
\mathbb{R},\mathcal {P}\otimes\mathcal{B}(\mathbb{R}), \mathbb{P}\otimes\mathbh{m} \bigr)
\]
such that for any fixed $x\in\mathbb{R}$ the function
$({}^pz)_{\cdot,x}$ is a version of the predictable projection (in the
classical sense) of $ z_{ \cdot,x}$. In the following, we will always
use this result to
get predictable projections which are measurable w.r.t. a parameter.
\end{rem}

\section{Malliavin differentiability of $(Y,Z)$}\label{sec3}

We will use the Malliavin derivative which is defined via chaos
expansions, that is series of multiple stochastic integrals.
Following It\^o \cite{ito}, for $n\geq1$ we define elementary
functions of the type
\[
f_n\bigl((t_1,x_1),\ldots,(t_n,x_n)
\bigr)=\sum_{k=1}^m a_k \prod
_{i=1}^n\mathbh{1} _{B_i^k}(t_i,x_i),
\]
where $a_k \in\mathbb{R}$, and for all $k$ the sets $B^k_i \in
\mathcal
{B}([0,T]\times\mathbb{R}), i=1,\ldots,n$ are disjoint and satisfy
$\mathbh{m}
(B_i^k)<\infty$. The multiple stochastic integral $I_n$ of order $n$ of
the elementary function $f_n$
with respect to the random measure $M$ is defined by
\[
I_n(f_n):=\sum_{k=1}^m
a_k \prod_{i=1}^n M
\bigl(B_i^k\bigr).
\]
Since the elementary functions given above are dense in
$L_2^n:=\LLL_2 ({ ({[0,T]}\times\mathbb{R}
)}^n,\mathbh{m}^{\otimes
n} )$,
by linearity and continuity of $I_n$ its domain extends to
$\LLL_2^n$.
For $n=0$, we set $\LLL_2^n:=\mathbb{R}$ and $I_0(f_0):=f_0$
for $f_0\in
\mathbb{R}$.
The properties of $I_n$ are very similar to those in the Brownian
case, especially it holds
\[
I_n(f_n) = I_n( \tilde f_n),
\]
where $\tilde f_n$ denotes the symmetrization of $f$ with respect to
the $n$ pairs $(t_1,x_1),\ldots,(t_n,x_n)$. Moreover,
\[
\mathbb{E}I_n( f_n) I_m( g_m) =
\cases{
\ds\langle\tilde f_n, \tilde
g_n \rangle_{\mathit{L}_2^n}, & \quad $n=m$,
\vspace*{3pt}\cr
0, &\quad  $n \neq m$.}
\]
Any $G \in\LLL_2$ has a chaos expansion
\[
G=\sum_{n=0}^\infty I_n(f_n)
\]
which is unique if symmetric $f_n \in\LLL_2^n$ are used (which
we will be our standing assumption from now on), and it holds
\[
\|G\|^2:= \|G\|^2_{\LLL_2}= \sum
_{n=0}^\infty n!\llVert f_n\rrVert
_{\LLL^n_2}^2.
\]
For example, for $X_s$ from (\ref{levy}) we have
%
\begin{equation}
\label{levy-chaos}
X_s = I_0(\gamma s ) + I_1(
\mathbh{1}_{[0,s]\times\mathbb{R}}) =
\gamma s + M \bigl([0,s]\times\mathbb{R}\bigr).
\end{equation}
A straightforward generalisation of \cite{Nualart}, Lemma 1.2.5, implies
($\mathbb{E}_t$ stands for the conditional expectation
$\mathbb{E}[ \cdot|\mathcal{F}_t ]$)
\[
\mathbb{E}_tG=\sum_{n=0}^\infty
I_n(f_n\mathbh{1}_{[0,t]^n}).
\]

The space ${\mathbb{D}_{1,2}}$ consists of all random variables
$G \in \LLL_2$ such\vspace*{-2pt} that
\[
\|G\|^2_{{\mathbb{D}_{1,2}}}:= \sum_{n=0}^\infty(n+1)!
\llVert f_n\rrVert _{\mathit{L}^n_2}^2<\infty.
\]
For $G \in{\mathbb{D}_{1,2}}$ the Malliavin derivative
\[
\mathcal{D}G \in\LLL_2(\mathbb{P}\otimes\mathbh{m}):=
\LLL_2 \bigl(\Omega\times[0,T] \times\mathbb{R},\mathcal
{F}_T\otimes\mathcal{B}\bigl( [0,T]\times\mathbb{R}\bigr),\mathbb{P}
\otimes\mathbh{m} \bigr)
\]
is given\vspace*{-3pt} by
\[
\mathcal{D}_{t,x}G(\omega):=\sum_{n=1}^\infty
nI_{n-1} \bigl(f_n \bigl((t,x), \cdot \bigr) \bigr) (
\omega),
\]
for $\mathbb{P}\otimes\mathbh{m}$-a.e. $(\omega,t,x)\in\Omega
\times
{[0,T]}\times\mathbb{R}$.
For example, for $X_s$ from (\ref{levy}) representation (\ref{levy-chaos}) implies
%
\begin{equation}
\label{levy-derivative}
\mathcal{D}_{t,x} X_s =
\mathcal{D}_{t,x} I_1 (\mathbh {1}_{[0,s]\times\mathbb{R}}) =
\mathbh{1}_{[t,T]}(s)\qquad \mbox{for } \mathbb{P}\otimes\mathbh{m}\mbox{-a.e. }(\omega ,t,x)\in\Omega \times{[0,T]}\times\mathbb{R}.
\end{equation}

For random variables in ${\mathbb{D}_{1,2}}$ there is an explicit
expression for the
integrand in the formulation of the predictable representation property
(for an introduction to stochastic integration w.r.t. random measures
see, e.g., \cite{Applebaum}).

\begin{lem}[(Clark--Ocone--Haussmann formula \cite{suv}, Theorem 10)]
\label{lem31}
Assume\hspace*{-2pt} $G\in{\mathbb{D}_{1,2}}$. Then
%
\begin{equation}
\label{COH}
G=\mathbb{E}G + \int_{[0,T]\times\mathbb{R}} {}^p(
\mathcal{D}G) _{t,x}M(\mathrm{d}t,\mathrm{d}x).
\end{equation}
\end{lem}

The Malliavin derivative $\mathcal{D}_{\cdot,0}$ can be interpreted
as a
Malliavin derivative in the Brownian setting with values in the
$\LLL_2$-space of random variables
depending on the jump part of the L\'evy process (see \cite{alos,suv2}). On the other hand, for $x\neq0$, the Malliavin derivative
$\mathcal{D}_{\cdot,x}$ behaves like a difference quotient
(see \cite{alos,suv2}).
This is also illustrated by the next lemma which contains formulae for
the Malliavin derivative of differentiable Lipschitz functions
depending on random variables in $\mathbb{D}_{1,2}$.

\begin{lem} \label{composition}
Let $f \in C^1(\mathbb{R}^n;\mathbb{R})$ with bounded partial
derivatives. If
$G_1,\ldots,G_n\in{\mathbb{D}_{1,2}}$ then $f(G_1,\ldots,G_n)\in
{\mathbb{D}_{1,2}}$
and
\begin{enumerate}[(ii)]
\item[(i)] for\vspace*{-3pt} $x=0$
it holds
\[
\mathcal{D}_{t,0}f(G_1,\ldots,G_n)=\sum
_{i=1}^n (\partial_i f)
(G_1,\ldots,G_n) \mathcal{D} _{t,0}G_i,
\]
for $\mathbb{P}\otimes\lambda$-a.e. $(\omega,t)$;
\item[(ii)] for\vspace*{-2pt} $x\neq0$ we get the difference quotient
\[
\mathcal{D}_{t,x}f(G_1,\ldots,G_n)=
\frac{f(G_1+x\mathcal{D}_{t,x}G_1,\ldots,G_n+x\mathcal{D}
_{t,x}G_n)-f(G_1,\ldots,G_n)}{x},
\]
for $\mathbb{P}\otimes\mathbh{m}$-a.e. $(\omega,t,x)$.
\end{enumerate}
\end{lem}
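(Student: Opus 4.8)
The plan is to prove Lemma~\ref{composition} by first establishing it for elementary/polynomial building blocks and then passing to the limit, treating the two cases $x=0$ and $x\neq0$ separately because they reflect genuinely different structures of the Malliavin derivative.

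For part~(ii), the key observation is that the difference-quotient formula is essentially the defining property of $\D_{t,x}$ for $x\neq 0$. First I would recall that for $x\neq 0$ the operator $\D_{t,x}$ acts as the transformation $G(\om)\mapsto (G(\om_{t,x})-G(\om))/x$, where $\om_{t,x}$ denotes the configuration with an extra jump of size (something proportional to) $x$ added at time $t$; this is the add-a-point interpretation underlying the chaos expansion on the jump part, and is made precise in \cite{alos},\cite{suv2}. Granting this, if $\D_{t,x}G_i = (G_i(\om_{t,x})-G_i(\om))/x$ then $G_i(\om_{t,x}) = G_i + x\D_{t,x}G_i$, and since adding a jump commutes with applying the deterministic function $f$, we get $f(G_1,\dotsc,G_n)(\om_{t,x}) = f(G_1(\om_{t,x}),\dotsc,G_n(\om_{t,x})) = f(G_1+x\D_{t,x}G_1,\dotsc,G_n+x\D_{t,x}G_n)$, and dividing by $x$ yields the claim. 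The remaining work is to check that $f(G_1,\dotsc,G_n)\in\DD$, i.e.\ that the resulting random variable and its derivative are square integrable; here the Lipschitz bound $|f(u)-f(v)|\le L|u-v|$ (which follows from boundedness of $\partial_i f$) controls the difference quotient pointwise by $L\sum_i|\D_{t,x}G_i|$, so $\|\D f(G_1,\dotsc,G_n)\|_{\Ltwo(\PP\otimes\m)}^2 \le nL^2\sum_i\|\D G_i\|_{\Ltwo(\PP\otimes\m)}^2<\infty$, and combined with $\E|f(G_1,\dotsc,G_n)|^2<\infty$ this gives membership in $\DD$.

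For part~(i), the cleanest route is approximation. I would first prove the chain rule for $f$ a polynomial, where $f(G_1,\dotsc,G_n)$ can be expanded and $\D_{t,0}$ computed by the Leibniz-type rule that $\D_{t,0}$ satisfies on products (this product rule for $\D_{\cdot,0}$ in the L\'evy setting, together with the fact that $\D_{\cdot,0}$ sees only the Brownian part, is standard; cf.\ \cite{alos},\cite{suv2},\cite{Nualart}). Then for general $f\in C^1$ with bounded partials, approximate $f$ by polynomials $f^{(k)}$ such that $f^{(k)}\to f$ and $\partial_i f^{(k)}\to \partial_i f$ uniformly on compact sets with uniformly bounded derivatives; since the $G_i$ are square integrable one controls the tails, passes to the limit in $\Ltwo(\PP\otimes\lambda)$ (for the $x=0$ component of the derivative) using the boundedness of the partials and dominated convergence, and uses closedness of the operator $\D_{\cdot,0}$ to identify the limit. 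Alternatively, and perhaps more efficiently, one can deduce~(i) as a ``differential'' limit of~(ii): formally $\D_{t,0}$ is the $x\to 0$ limit of $\D_{t,x}$, and the difference quotient $(f(G+x\D_{t,x}G)-f(G))/x$ converges, as $x\to 0$, to $\sum_i(\partial_i f)(G)\D_{t,0}G_i$ provided $\D_{t,x}G_i\to\D_{t,0}G_i$; but making this rigorous requires knowing the $x=0$ component behaves as the limit of the $x\neq 0$ components, which is itself a nontrivial structural fact, so I would keep the polynomial-approximation proof as the primary argument.

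The main obstacle I anticipate is the bookkeeping in the limiting argument for part~(i): one must simultaneously show (a) $\Ltwo$-convergence of $f^{(k)}(G_1,\dotsc,G_n)$ to $f(G_1,\dotsc,G_n)$, (b) $\Ltwo(\PP\otimes\lambda)$-convergence of $\D_{\cdot,0}f^{(k)}(G_1,\dotsc,G_n)=\sum_i(\partial_i f^{(k)})(G_1,\dotsc,G_n)\D_{\cdot,0}G_i$ to $\sum_i(\partial_i f)(G_1,\dotsc,G_n)\D_{\cdot,0}G_i$, and (c) $\Ltwo(\PP\otimes\m)$-convergence of the $x\neq 0$ components, all uniformly enough to invoke closedness of $\D$ on $\DD$; the delicate point is that polynomial approximations of a globally Lipschitz $C^1$ function need not have uniformly bounded derivatives on all of $\R^n$, so one either works with a localization/truncation of the $G_i$ and a separate tail estimate, or replaces polynomials by a more convenient dense class (e.g.\ smooth functions with compact support plus a linear correction) for which the uniform derivative bound is automatic. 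Once this approximation scheme is set up, both identities drop out.
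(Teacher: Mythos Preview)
Your proposal is correct in spirit, but it takes a substantially longer route than the paper does. The paper's proof is essentially a pair of citations: for (i) it invokes the structural results of Sol\'e--Utzet--Vives \cite{suv2} (Propositions~3.3 and~3.5), which identify $\D_{\cdot,0}$ with a genuine Brownian Malliavin derivative (viewing the jump part as a parameter), and then applies the classical chain rule \cite[Proposition~1.2.3]{Nualart} directly; for (ii) it says the formula is a straightforward $n$-variable extension of \cite[Lemma~5.1]{GL}. Your treatment of (ii) via the add-a-point interpretation and the Lipschitz bound is exactly the content of that extension, so there you and the paper coincide. For (i), however, your polynomial-approximation-plus-closedness argument is a genuinely different (and more laborious) path: it reproves from scratch what the paper obtains in one line by importing the Brownian chain rule through the $\D_{\cdot,0}$/Wiener-derivative identification. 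Your own diagnosis of the obstacle---polynomials do not approximate a globally Lipschitz $C^1$ function with uniformly bounded derivatives---is accurate and would force either a localization or a change of approximating class (mollified functions rather than polynomials, as in the standard proof of \cite[Proposition~1.2.3]{Nualart}); the paper sidesteps all of this by citing the finished result. In short: your argument would go through once the approximation class is fixed, but the paper's approach is cleaner because it recognizes that (i) is literally the Brownian chain rule in disguise.
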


\begin{pf}
Assertion (i) follows immediately from \cite{suv2}, Proposition~3.5,
combined with \cite{suv2}, Proposition~3.3  and
\cite{Nualart}, Proposition~1.2.3. Assertion (ii) is a straightforward
extension of \cite{GL}, Lemma~5.1.
\end{pf}

We will make use of the following properties for the Malliavin
derivative \cite{ImkellerDelong}, Lemmas \mbox{3.1--3.3}.

\begin{lem} \label{ImkD}
\textup{(i)} Let $G\in{\mathbb{D}_{1,2}}$. Then for $0\leq s\leq T$,
$\mathbb{E}_s G\in{\mathbb{D}_{1,2}}$ and
\[
\mathcal{D}_{t,x}\mathbb{E}_s G=\mathbb{E}_s(
\mathcal{D}_{t,x}G) \mathbh{1}_{[0,s]}(t),\qquad \mathbb{P}\otimes
\mathbh{m}\mbox{-a.e.}\\[-18pt]
\]
\begin{longlist}[(iii)]
\item[(ii)]  Let $F\dvtx \Omega\times{[0,T]}\times\mathbb{R}\to
\mathbb{R}$ be a product
measurable and adapted process, $\rho$ a finite measure on
$([0,T]\times
\mathbb{R}, \mathcal{B}([0,T]\times\mathbb{R}))$ such that the
conditions
\begin{enumerate}[(iii) (a)]
\item[(a)]  \vspace*{2pt}$\mathbb{E}\int_{[0,T]\times\mathbb{R}}|F(s,y)|^2\rho
(\mathrm{d}s,\mathrm{d}y )<\infty$,

\item[(b)] \vspace*{2pt}$F(s,y)\in{\mathbb{D}_{1,2}}$ for $\rho$-a.e. $(s,y)\in
{[0,T]}\times\mathbb{R}$,

\item[(c)] $\mathbb{E}\int_{[0,T]\times\mathbb{R}}\int_{[0,T]\times
\mathbb{R}} |\mathcal{D}_{t,x}F(s,y)|^2\rho
(\mathrm{d}s,\mathrm{d}y )\mathbh{m}(\mathrm{d}t,\mathrm{d}x)<\infty$
\end{enumerate}
are satisfied. Then
\[
\int_{[0,T]\times\mathbb{R}}F(s,y)\rho(\mathrm{d}s,\mathrm{d}y)\in{\mathbb{D}_{1,2}},
\]
and the differentiation rule
\[
\mathcal{D}_{t,x}\int_{[0,T]\times\mathbb{R}}F(s,y)\rho (\mathrm{d}s,\mathrm{d}y)=\int
_{[0,T]\times\mathbb{R}}\mathcal{D} _{t,x}F(s,y)\rho(\mathrm{d}s,\mathrm{d}y)
\]
holds for $\mathbb{P}\otimes\mathbh{m}$-a.e. $(\omega,t,x)\in
\Omega\times
{[0,T]}\times\mathbb{R}$.

\item[(iii)] Let $F\dvtx\Omega\times{[0,T]}\times\mathbb{R}\to
\mathbb{R}$ be a
predictable process satisfying
\[
\mathbb{E}\int_{[0,T]\times\mathbb{R}}\bigl|F(s,y)\bigr|^2\mathbh{m}(\mathrm{d}s,\mathrm{d}y )<\infty.
\]
Then conditions \textup{(a)}--\textup{(c)} of \textup{(ii)} are satisfied for $\rho=\mathbh{m}$
if and only if
\[
\int_{[0,T]\times\mathbb{R}}F(s,y)M(\mathrm{d}s,\mathrm{d}y )\in{\mathbb{D}_{1,2}}.
\]
In this case, the formula
\[
\mathcal{D}_{t,x}\int_{[0,T]\times\mathbb
{R}}F(s,y)M(\mathrm{d}s,\mathrm{d}y )=F(t,x)+
\int_{[0,T]\times\mathbb{R}}\mathcal{D} _{t,x}F(s,y)M(\mathrm{d}s,\mathrm{d}y )
\]
holds $\mathbb{P}\otimes\mathbh{m}$-a.e.
\end{longlist}
\end{lem}

The following theorem is concerned with Malliavin differentiability of
the solution to a BSDE of the form
%
\begin{equation}
\label{beq2}
Y_t = \xi+\int_t^T
f (s, X_s, Y_{s}, \bar{Z}_s )\,\mathrm{d}s-\int
_{{]t,T]}\times\mathbb{R}}Z_{s,x}M(\mathrm{d}s,\mathrm{d}x),\qquad 0\le t\le T,
\end{equation}
where we will assume
\begin{enumerate}[($A_f 1$)]
\item[($A_f$)]
$f \in\mathcal{C}([0,T]\times\mathbb{R}^3)$ is Lipschitz-continuous
in $(x,y,z)$,
uniformly in $t$, that is,
%
\begin{equation}\label{Lipschitzcondition}
\bigl|f(t,x_1,y_1,z_1)-f(t,x_2,y_2,z_2)\bigr|
\le L_f \bigl(|x_1- x_2|+|y_1-y_2|+|z_1-z_2|\bigr).
\end{equation}
\item[($A_f 1$)] $f$ satisfies ($A_f $) and
$f \in\mathcal{C}^{0,1,1,1}([0,T]\times\mathbb{R}^3)$.
\end{enumerate}

Note that (\ref{beq2}) is a special form of (\ref{beq}), and
$F(\omega,t,y,z) :=f(t,X_t(\omega),y,z)$ satisfies
($A_F $) if $f$ does satisfy ($A_f$).

\begin{thm}\label{diffthm}
Let $\xi\in\mathbb{D}_{1,2}$ and assume ($A_f 1$). Then the following
assertions hold.
\begin{longlist}[(iii)]
\item[(i)]\label{alef} For $\mathbh{m}$-a.e. $(r,v) \in[0,T]\times
\mathbb{R}$, there
exists a unique solution $(U^{r,v},V^{r,v})$ $\in S\times H $ to the BSDE
%
\begin{eqnarray}
U^{r,v}_t &=& \mathcal{D}_{r,v}
\xi+ \int_t^T F_{r,v} \bigl( s,
U^{r,v}_s, \bar {V}^{r,v} _s \bigr)
\,\mathrm{d}s 
-\int_{{]t,T]}\times\mathbb{R}} V^{r,v}_{s,x}M(\mathrm{d}s,\mathrm{d}x),
\qquad t \in[r,T],
\nonumber
\\[-8pt]
\label{U-V-equation}\\[-8pt]
\nonumber
U^{r,v}_t &=& V^{r,v}_{s,x} =0,\qquad t
\in[0,r),
\end{eqnarray}
where $ \bar{V}^{r,v} _s := \int_\mathbb{R}V^{r,v} _{s,x}\kappa
(\mathrm{d}x)$,
\[
F_{r,0} \bigl(s, U^{r,0}_s, \bar{V}^{r,0}
_s \bigr) := \bigl\langle\nabla f (s,X_s,Y_s,
\bar{Z}_s ), \bigl(\mathbh{1} _{[r,T]}(s),
U^{r,0}_s, \bar{V}^{r,0} _s \bigr)
\bigr\rangle,
\]
with $\nabla= (\partial_x, \partial_y,\partial_z)$, and
\begin{eqnarray}
F_{r,v} \bigl(s, U^{r,v}_s, \bar{V}^{r,v}
_s \bigr)& :=& \frac{1}{v}\bigl[f \bigl(s,X_s+v
\mathbh {1}_{[r,T]}(s),Y_s+vU^{r,v}_s,
\bar {Z}_s +v \bar{V}^{r,v}_s
\bigr)
-f (s,X_s,Y_s, \bar{Z}_s )\bigr] \nonumber\\
\eqntext{\mbox{for } v\neq0.}
\end{eqnarray}

\item[(ii)]\label{bet} For the solution $(Y,Z)$ of (\ref{beq2}) it holds
%
\begin{equation}
\label{Y-and-Z-in-D12}
Y \in\LLL_2\bigl([0,T];{\mathbb{D}_{1,2}}
\bigr), \qquad  Z \in\mathit{L}_2\bigl([0,T]\times\mathbb{R};{
\mathbb{D}_{1,2}}\bigr),
\end{equation}
and $\mathcal{D}_{r,v}Y$ admits a c\`adl\`ag version for $\mathbh{m}$-a.e. $(r,v) \in
[0,T]\times\mathbb{R}$.
\item[(iii)]\label{gimmel} $(\mathcal{D}Y,\mathcal{D}Z)$ is a version of
$(U,V)$, that is,
for $\mathbh{m}$-a.e. $(r,v)$ it solves
%
\begin{eqnarray}
\mathcal{D}_{r,v}Y_t &= &
\mathcal{D}_{r,v}\xi+\int_t^T
F_{r,v} \biggl(s,\mathcal{D}_{r,v}Y_s, \int
_\mathbb{R}\mathcal{D}_{r,v}Z_{s,x}\kappa(\mathrm{d}x)
\biggr)\,\mathrm{d}s
\nonumber
\\[-8pt]
\label{diffrep}
\\[-8pt]
\nonumber
&&{}-\int_{{]t,T]}\times\mathbb{R}} \mathcal{D}_{r,v}
Z_{s,x}M(\mathrm{d}s,\mathrm{d}x),\qquad t \in[r,T].
\end{eqnarray}
\item[(iv)]\label{dalet} The process ${}^p (
(\mathcal{D}
_{r,v}Y_r )_{r\in{[0,T]}, v\in\mathbb{R}} )$ is a
version of $Z$
where we set
\[
\mathcal{D}_{r,v}Y_r(\omega):=\lim_{t\searrow r}
\mathcal {D}_{r,v}Y_t(\omega)
\]
for all $(r,v,\omega)$ for which $ \mathcal{D}_{r,v}Y$ is c\`adl\`ag
and $\mathcal{D}
_{r,v}Y_r(\omega):=0$ otherwise.
\end{longlist}
\end{thm}

In the setting of time, delayed BSDEs a similar result was proved by
Delong and Imkeller \cite{ImkellerDelong} assuming that the time
horizon $T$
or the Lipschitz constant $L_f$ of the generator are sufficiently small.
For the convenience of the reader a proof of Theorem \ref{diffthm} is
contained in the preprint version \cite{GeissSteinicke}.

\section{Chaotic representation of $(Y,Z)$}\label{sec4}

The goal of this section is to investigate properties of the chaos
expansions of the solution $(Y,Z)$ to (\ref{beq2})
with terminal values $\xi$ of the following form:

Let $r_0=0<r_1<\cdots<r_m=T$ be a partition of $[0,T]$. Define
$\Lambda
_k:={]r_{k-1},r_k]}$ for $k=1, \ldots,m$ and $V_m^n := \{1,\ldots,m\}^n$.
The set of cuboids
\[
\bigl\{ \Lambda_\alpha:=\Lambda_{\alpha_1}\times\cdots\times
\Lambda _{\alpha_n}\dvt  \alpha =(\alpha_1, \ldots,
\alpha_n) \in V_m^n \bigr\}
\]
forms a partition of $]0,T]^n$.
Furthermore, we let
\begin{eqnarray*}
\mathbb{H} & := &\Biggl\{\xi=\sum_{n=0}^\infty
I_n(f_n)\in\LLL_2\dvt  \exists
g_n^{\alpha} \in \LLL_2\bigl(
\mathbb{R}^n, \mu^{\otimes n}\bigr) \mbox{ such that }
\\
&&\hspace*{7pt}f_n\bigl((t_1,x_1),\ldots,(t_n,x_n)
\bigr)=\sum_{\alpha\in
V_m^n}g_n^{\alpha}(x_1,\ldots,x_n) \mathbh{1}_{\Lambda_\alpha}(t_1,\ldots,t_n)\Biggr\}.
\end{eqnarray*}
Hence, on each cuboid $\Lambda_\alpha$ the function $f_n$ is constant in
$(t_1,\ldots, t_n)$.
In particular, this space contains random variables of the form
\[
g(X_{r_m}-X_{r_{m-1}},\ldots,X_{r_1}-X_0)
\in\LLL_2,
\]
where $g$ is a Borel function (see \cite{baumgartner:geiss:2011}).

The benefit to consider terminal conditions from $\mathbb{H}$ lies in
the fact that $t \mapsto\mathcal{D}_{tx}\xi$ is a.e. constant as
long as $t$ is
within an interval $\Lambda_k$. This property will be used several times
below, especially in the proofs of Lemmas~\ref{Hsmoothlemma}--\ref{savinglemma}.

\begin{rem}\label{approximationthm}
By convolution with mollifiers, we construct for any function $f\in
\mathcal{C}({[ 0,T]} \times\mathbb{R}^3)$ satisfying ($A_f$) a sequence
$(f_n)_{n\geq0}$
converging uniformly to $f$ in $\mathcal{C}({[0,T]}\times\mathbb{R}^3)$, such
that for all
$n \in\mathbb{N}$ and $t\in{[0,T]}$ we have $f_n(t,\cdot)\in
\mathcal{C}^\infty(\mathbb{R}^3)$, and $f_n$ satisfies the Lipschitz-condition
(\ref{Lipschitzcondition}) with the same constant
$L_f$ for all $n$.

Let $(\xi_n)_{n\geq0} \subseteq{\mathbb{D}_{1,2}}$ be\vspace*{1pt} a sequence
converging to $\xi$
in $\LLL_2$. By $(Y^n,Z^n)$, we will denote the solution to
(\ref{beq2})
with terminal condition $\xi_n$ and generator $f_n$. Then Theorem~\ref{continuitythm} implies that
%
\begin{equation}
\label{Yn-Znconvergence}
\bigl(Y^n,Z^n\bigr)\to(Y,Z) \qquad\mbox{if } n\to
\infty\mbox{ in } S\times H.
\end{equation}
\end{rem}

If $\xi\in\mathbb{H}$,  then the solution $(Y,Z)$ has a chaos expansion
which resembles those of the elements of $\mathbb{H}$.

\begin{thm}\label{Hthm}
Let ($A_f$) hold.
For $\xi\in\mathbb{H}$ the chaos expansion of $(Y,Z) \in S\times H$ has
the representation
%
\begin{equation}
\label{y-rep}
Y_t = \sum_{n=0}^\infty
I_n \biggl(\sum_{\alpha\in V_m^n} \varphi
_n^{\alpha}(t) \mathbh{1}_{\Lambda_\alpha\cap]0,t]^n} \biggr),\qquad \mathbb{P}
\otimes \lambda\mbox{-a.e.},
\end{equation}
where $\varphi_n^{\alpha} \dvtx [0,T]\times\mathbb{R}^n \to\mathbb{R}$
is jointly
measurable, $\varphi_n^{\alpha}(t) \in\LLL_2(\mathbb{R}^n,
\mu^{\otimes n})$
and
%
\begin{equation}
\label{z-rep}
Z_{t,x} =\sum_{n=0}^\infty
I_n \biggl( \sum_{\alpha\in V_m^n} \psi
_n^{\alpha}(t,x) \mathbh{1}_{\Lambda_\alpha\cap]0,t]^n} \biggr),\qquad
\mathbb{P}\otimes\mathbh{m} \mbox{-a.e.},
\end{equation}
where $\psi_n^{\alpha} \dvtx  [0,T]\times\mathbb{R}^{n+1} \to\mathbb{R}$
is jointly
measurable and $\psi_n^{\alpha}(t) \in\LLL_2(\mathbb{R}^{n+1}, \mu^{\otimes
n+1})$.
\end{thm}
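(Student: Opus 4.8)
The plan is to establish the claimed structure by a fixed-point / Picard iteration argument, showing that the subspace of $S \times H$ consisting of pairs whose chaos kernels have the prescribed ``piecewise-constant-in-time'' form is stable under the map whose fixed point is the BSDE solution. Concretely, start from $\xi \in \mathbb H$, which by definition has kernels $f_n((t_1,x_1),\dots,(t_n,x_n)) = \sum_{\alpha \in V_m^n} g_n^\alpha(x_1,\dots,x_n)\one_{\I_\alpha}(t_1,\dots,t_n)$. The key preliminary fact is that this structural property is closed under the operations that appear in one Picard step: conditional expectation $\E_t$ (which replaces $f_n$ by $f_n \one_{[0,t]^n}$, and since $]0,t]^n$ intersected with a cuboid $\I_\alpha$ again gives, for $t$ inside a fixed interval $\I_k$, a finite union of sub-cuboids with kernels still only depending on the $x$-variables — this is exactly the observation highlighted in the remark before the theorem), time-integration against $ds$, and the Clark--Ocone representation \eqref{COH} which reads off $Z$ from $\D Y$. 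I would first record, as a lemma or directly, that $\D_{t,x}\xi$ for $\xi \in \mathbb H$ has a chaos expansion whose kernels are again of the cuboid-constant type (now in $n-1$ time variables, with the extra spatial variable $x$ absorbed into the coefficient function), and that this class is a closed subspace of $S\times H$.

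Next I would set up the Picard scheme: let $(Y^0,Z^0) = (0,0)$ and define $(Y^{j+1}, Z^{j+1})$ as the solution of the BSDE with generator evaluated at $(Y^j, \bar Z^j)$, equivalently $Y^{j+1}_t = \E_t\big[\xi + \int_t^T f(s,X_s,Y^j_s,\bar Z^j_s)\,ds\big]$ with $Z^{j+1}$ obtained from the Clark--Ocone / predictable representation of the martingale $\E_t[\xi + \int_0^T f(s,X_s,Y^j_s,\bar Z^j_s)\,ds]$. I would prove by induction on $j$ that each $(Y^j, Z^j)$ has chaos kernels of the form in \eqref{y-rep}--\eqref{z-rep}. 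The inductive step requires: (a) that $f(s,X_s,Y^j_s,\bar Z^j_s)$, as a random field in $s$, has, for each fixed $s$ in a fixed $\I_k$, a chaos expansion with cuboid-constant kernels — here one uses that $X_s = \gamma s + M([0,s]\times\R)$ already has this form, that $Y^j_s$ and $\bar Z^j_s$ do by the induction hypothesis, and that the class is stable under the $C^1$-Lipschitz composition with $f$ via Lemma \ref{composition} together with the mollified-generator approximation of Remark \ref{approximationthm} (to legitimately apply the composition rule we pass to smooth $f_n$ and then take limits using Theorem \ref{continuitythm}); (b) that integrating in $s$ and applying $\E_t$ preserves the structure, using the intersection-with-$]0,t]^n$ bookkeeping from the preliminary lemma; and (c) that the predictable representation of such a conditional expectation yields $Z^{j+1}$ of the form \eqref{z-rep}, which follows from reading the Clark--Ocone integrand $^p(\D Y^{j+1})$ off the derivative's chaos expansion. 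Finally, since $(Y^j,Z^j) \to (Y,Z)$ in $S\times H$ and the subspace with the prescribed kernel structure is closed, the limit $(Y,Z)$ inherits the representation; the joint measurability and the $\Ltwo(\R^n,\mu^{\otimes n})$, resp.\ $\Ltwo(\R^{n+1},\mu^{\otimes n+1})$, membership of $\varphi_n^\alpha(t)$, resp.\ $\psi_n^\alpha(t,x)$, come from the $S\times H$-norm bound translated via $\|G\|^2 = \sum_n n!\|f_n\|^2$ into an $L_2$-bound on the kernels together with a measurable-selection / Fubini argument for the $t$-dependence.

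The main obstacle I expect is making the ``closed under one Picard step'' claim genuinely rigorous for the nonlinear generator term, i.e.\ step (a) above: Lemma \ref{composition} gives the derivative of $f(G_1,\dots,G_n)$ but does not directly say that the chaos kernels of $f(G_1,\dots,G_n)$ are cuboid-constant when those of the $G_i$ are. The honest route is to argue on the level of Malliavin derivatives rather than chaos kernels: the condition ``$\xi \in \mathbb H$'' is equivalent to ``$t \mapsto \D_{t,x}\xi$ is $\PP\otimes\m$-a.e.\ constant on each $\I_k$'' (plus integrability), and this latter formulation is exactly what Lemma \ref{composition}, Lemma \ref{ImkD}, and the BSDE for $\D_{r,v}Y$ in Theorem \ref{diffthm}\eqref{gimmel} propagate cleanly — one shows $\D_{r,v}Y_t$ and $\D_{r,v}Z_{t,x}$ are a.e.\ constant in $r$ on each $\I_k$ by a fixed-point argument in the space of Malliavin derivatives, exploiting that the generator $F_{r,v}$ in \eqref{U-V-equation} depends on $r$ only through $\one_{[r,T]}(s)$, which is constant in $r$ on $\I_k$ as long as $s \notin \I_k$, and handling the diagonal contribution $s \in \I_k$ separately. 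Translating the constancy of $\D_{t,x}Y_s$ and $\D_{t,x}Z_{s,y}$ back into the chaos-kernel statements \eqref{y-rep}--\eqref{z-rep} is then bookkeeping via $\D_{t,x}I_n(f_n) = nI_{n-1}(f_n((t,x),\cdot))$ and induction on $n$. I would also need to take care, for $\xi \in \mathbb H$ only assumed in $\Ltwo$ (not a priori in $\DD$), to first carry out the argument for $\xi \in \mathbb H \cap \DD$ and then pass to the limit using density and Theorem \ref{continuitythm}, since the closedness of the target subspace in $S\times H$ makes this harmless.
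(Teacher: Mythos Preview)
Your overall architecture---reduce to $\xi\in\mathbb{H}\cap\DD$ and smooth $f$ via Remark~\ref{approximationthm}, run the Picard scheme, and pass to the limit using closedness of the target class in $S\times H$---matches the paper's proof exactly. The point of departure is precisely the step you flag as the main obstacle: showing that $f(s,X_s,Y^j_s,\bar Z^j_s)$ inherits the cuboid-constant kernel structure from $X_s,Y^j_s,\bar Z^j_s$. The paper dispatches this in one line by invoking Lemma~\ref{Baumgartner-Geiss} (drawn from \cite{baumgartner:geiss:2011}): the space $\mathbb{H}_t$ is closed under arbitrary Borel compositions, essentially because it coincides with $\Ltwo$ of the $\sigma$-algebra generated by the finitely many increments $X_{r_1\wedge t}-X_{r_0},\dots,X_{r_k\wedge t}-X_{r_{k-1}}$. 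This handles the nonlinear step without any differentiability of $f$ and without passing through Malliavin derivatives at this point; the Malliavin machinery is then used only to read off $Z^{k+1}$ from $\D Y^{k+1}$.

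Your alternative route---characterising $\mathbb{H}_t\cap\DD$ by constancy of $r\mapsto\D_{r,v}G$ on each $\I_j\cap]0,t]$ and propagating this via Lemma~\ref{composition} or directly via the BSDE~\eqref{U-V-equation} for $(\D_{r,v}Y,\D_{r,v}Z)$---is a genuinely different and workable argument that avoids the external input of Lemma~\ref{Baumgartner-Geiss}. In fact your second sub-route is cleaner than you suggest: since the equation~\eqref{U-V-equation} is posed on $[t,T]$ with $t\ge r$, one always has $\one_{[r,T]}(s)=1$ for the relevant range $s\in[t,T]$, so the generator $F_{r,v}(s,\cdot)$ does not depend on $r$ there at all, and the terminal value $\D_{r,v}\xi$ is constant in $r$ on $\I_k$ by hypothesis; uniqueness of the BSDE solution then gives constancy of $(\D_{r,v}Y_t,\D_{r,v}Z_{t,\cdot})$ in $r\in\I_k\cap]0,t]$ directly, with no diagonal contribution to handle. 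Symmetry of the chaos kernels then transfers this first-variable constancy to all time variables, yielding \eqref{y-rep}--\eqref{z-rep}. What your route buys is self-containment within the Malliavin framework already set up in Section~3; what the paper's route buys is that the nonlinear step becomes a one-line consequence of a structural fact about $\mathbb{H}_t$ that holds for all Borel $f$.
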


\begin{pf}
We may use Remark \ref{approximationthm} and approximate $\xi\in
\mathbb{H}$ by a sequence $(\xi_n)_n \subseteq\mathbb{H} \cap
{\mathbb{D}_{1,2}}$
and $f$ by $(f_n)_n $ satisfying $(A_f 1)$. Since the convergence in
$S\times H$ implies convergence w.r.t. the norm
%
\begin{equation}
\label{normprod}
\bigl|\hspace{-2pt} \bigr\|(y,z)\bigr|\hspace{-2pt}\bigr\|:= \bigl( \| y \|
^2_{\LLL_2
(\mathbb{P}\otimes\lambda)} + \|z\|^2_{H}
\bigr)^{{1}/{2}},
\end{equation}
and the space of processes $(y,z)$ with representations (\ref{y-rep})
and (\ref{z-rep}) is closed with respect
to the norm (\ref{normprod}) we only need to show that the assertion
holds for any solution $(Y^n,Z^n)$
w.r.t. $(\xi_n, f_n)$.
Hence we may assume that $\xi\in\mathbb{H} \cap{\mathbb{D}_{1,2}}$
and $f\in\mathcal{C}
^{0,1,1,1}([0,T]\times\mathbb{R}^3)$ such that $\partial_x f
,\partial_y f$
and $\partial_z f$ are bounded by $L_f$.
According to Theorem \ref{diffthm}, we can differentiate (\ref{beq2})
and obtain for $\mathbh{m}$-a.e. $(t,x)$ and all $s \in[t,T]$ that
\[
\mathcal{D}_{t,x}Y_s = \mathcal{D}_{t,x}\xi+
\int_s^T \mathcal{D}_{t,x}
f(r,X_r,Y_r,\bar{Z}_r)\,\mathrm{d}r-\int
_{{]s,T]}\times\mathbb{R}}\mathcal{D}_{t,x}Z_{r,y}M(\mathrm{d}r,\mathrm{d}y).
\]

Theorem \ref{diffthm} yields that $Z$ is a version of ${}^p (\mathcal{D}_{t,x}Y_t )$, hence
\[
Z_{t,x}= \mathcal{D}_{t,x}Y_t, \qquad \mathbb{P}\otimes
\mathbh{m}\mbox{-a.e.}
\]

We define the recursion
%
\begin{eqnarray}
Y^0_s &:=& 0, \qquad Z^0_{s,y}:=0,
\nonumber
\\[-8pt]
\label{rek2y}
\\[-8pt]
\nonumber
\mathcal{Y}^{k+1}_{s} &:=& \xi+\int
_s^Tf \bigl(r,X_{r},Y^k_{r},
\bar {Z}^k_r \bigr)\,\mathrm{d}r, \qquad Y^{k+1}:=
{}^o \bigl( \mathcal{Y}^{k+1}\bigr),
\end{eqnarray}
where ${}^o$ denotes the optional projection, which is
according to \cite{DellacherieMeyer}, Theorem 47 and Remark 50,  c\`adl\`ag.
Since $Y_u^{k+1} = \mathbb{E}_u \mathcal{Y}^{k+1}_u$ $\mathbb{P}$-a.s.
one gets by Lemma \ref{ImkD}
%
\begin{eqnarray}
\mathcal{D}_{s,y}Y_u^{k+1} &=&
\mathcal{D}_{s,y} \mathbb{E}_u\xi+ \mathcal{D}_{s,y}
\mathbb {E}_u \int_u^T f
\bigl(r,X_r,Y^k_r,\bar {Z}^k_r
\bigr)\,\mathrm{d}r
\nonumber
\\[-8pt]
\label{D-of-Yk}\\[-8pt]
\nonumber
&=& \mathbb{E}_u \mathcal{D}_{s,y} \xi+
\mathbb{E}_u \int_u^T
\mathcal{D}_{s,y} f\bigl(r,X_r,Y^k_r,
\bar {Z}^k_r\bigr)\,\mathrm{d}r,\qquad u \in[s,T].
\end{eqnarray}
Since $ \mathcal{D}_{s,y} \xi+ \int_u^T \mathcal{D}_{s,y}
f(r,X_r,Y^k_r,\bar{Z}^k_r)\,\mathrm{d}r,
u \in[s,T]$, has continuous paths for a.e. $(s,y)$ we can again apply
\cite{DellacherieMeyer}, Theorem 47 and Remark 50,  to get a c\`adl\`ag
optional projection. Hence, we may define the set
\[
A_k:= \bigl\{ (s,y) \in[0,T]\times\mathbb{R}\dvt \mbox{the RHS of (\ref{D-of-Yk}) is c\`adl\`ag on } [s,T] \ \mathbb{P}\mbox{-a.s.} \bigr\}
\]
and assume a pathwise c\`adl\`ag version of $\mathcal{D}_{s,y}
Y^{k+1}$ for any
$(s,y) \in A_k$ while we let $\mathcal{D}_{s,y} Y^{k+1}$ be zero otherwise.
In this sense, we can set
%
\begin{equation}
\label{rek2z}
\mathcal{Z}^{k+1}_{s,y}:= \lim
_{t \searrow s} \mathcal{D}_{s,y} Y^{k+1}_t, \qquad Z^{k+1}:= {}^p \bigl(\mathcal{Z}^{k+1}
\bigr)
\end{equation}
for $k=0,1,2,\ldots\,$.

The process $Y^{k+1}$ has a c\`adl\`ag version, therefore, $(Y^k,Z^k)
\in S\times H$ for all $k \in\mathbb{N}$.
In the proof of \cite{Tangli}, Theorem \ref{existence}, it is shown
that $(Y^k,Z^k)$ converges to $(Y,Z)$
with respect to the norm (\ref{normprod}).

Consequently, we only need to show that (\ref{y-rep}) and (\ref{z-rep})
hold for $(Y^k,Z^k)$.

For fixed $t\in \,]0,T[$, we describe (\ref{y-rep}) by introducing the space
\[
\mathbb{H}_t := \Biggl\{ \sum_{n=0}^\infty
I_n \biggl(\sum_{\alpha\in V_m^n}
g_n^{\alpha} \mathbh{1}_{\Lambda_\alpha\cap]0,t]^n} \biggr) \in
\LLL_2 \dvt  g_n^{\alpha} \in\LLL_2
\bigl(\mathbb{R}^n, \mu^{\otimes n}\bigr) \Biggr\}.
\]
From \cite{baumgartner:geiss:2011}, one can conclude the following fact.

\begin{lem} \label{Baumgartner-Geiss}
For any Borel function $h \dvtx \mathbb{R}^d \to\mathbb{R}$ and $\xi_1,
\ldots,\xi_d \in
\mathbb{H}_t$
such that $h( \xi_1, \ldots,\break \xi_d) \in\LLL_2$ it holds
$h(\xi_1, \ldots, \xi_d) \in\mathbb{H}_t$.
\end{lem}

Assume now that (\ref{y-rep}) and (\ref{z-rep}) hold for $Y^k$ and
$Z^k$, respectively. We have
%
\begin{eqnarray}
\bar{Z}^k_t &=&\int_{\mathbb{R}}\sum
_{n=0}^\infty I_n \biggl(\sum
_{\alpha\in
V_m^n}\psi_n^{\alpha}(t,x)
\mathbh{1}_{\Lambda_\alpha\cap
[0,t]^{\otimes n}} \biggr)\kappa(\mathrm{d}x)
\nonumber\\
&=& \sum_{n=0}^\infty I_n \biggl(
\sum_{\alpha\in V_m^n}\int_{\mathbb{R}} \psi
_n^{\alpha}(t,x) \kappa(\mathrm{d}x) \mathbh{1}_{\Lambda_\alpha\cap
[0,t]^{\otimes
n}} \biggr)
\\
&=&\sum_{n=0}^\infty I_n \biggl(
\sum_{\alpha\in V_m^n }{\bar{ \psi }}_n^{\alpha}(t)
\mathbh{1}_{\Lambda_\alpha\cap[0,t]^{\otimes n}} \biggr) \in \mathbb{H}_t.\nonumber
\end{eqnarray}

From Lemma \ref{Baumgartner-Geiss}, it follows that $
f(t,X_{t},Y^k_{t},\bar{Z}^k_t) \in\mathbb{H}_t$ that is,
\[
f \bigl(t,X_{t},Y^k_{t},
\bar{Z}^k_t \bigr)=\sum_{n=0}^\infty
I_n \biggl(\sum_{\alpha\in V_m^n}g_n^{\alpha}(t)
\mathbh{1}_{\Lambda_\alpha
\cap
]0,t]^{\otimes n} } \biggr),
\]
with $g_n^{\alpha}(t) \in\LLL_2 (\mathbb{R}^n, \mu
^{\otimes n})$.
Because $ f(\cdot,X_\cdot,Y^k_\cdot,\bar{Z}^k_\cdot) $
is square integrable w.r.t. $\mathbb{P}\otimes\lambda$ on $\Omega
\times
[0,T]$ one can show that the $g_n^{\alpha}$ can be chosen jointly
measurable. This implies
\begin{eqnarray*}
\mathbb{E}_t \int_t^Tf
\bigl(r,X_r,Y^k_r,\bar{Z}^k_r
\bigr)\,\mathrm{d}r &=& \int_t^T \sum
_{n=0}^\infty I_n \biggl(\sum
_{\alpha\in
V_m^n}g_n^{\alpha}(r)\mathbh{1}_{\Lambda_\alpha\cap]0,t]^{\otimes
n} }
\biggr) \,\mathrm{d}r
\\
&=& \sum_{n=0}^\infty I_n
\biggl(\sum_{\alpha\in V_m^n} \int_t^T
g_n^{\alpha}(r) \,\mathrm{d}r \mathbh{1}_{\Lambda_\alpha\cap]0,t]^{\otimes n}
} \biggr).
\end{eqnarray*}

From (\ref{rek2y}), we have that $Y_t^{k+1} = \mathbb{E}_t \mathcal
{Y}^{k+1}_t$ $\mathbb{P}\mbox{-a.s.}$ and since $\mathbb{E}_t \xi\in\mathbb
{H}_t$ we
conclude representation~(\ref{y-rep}) for $Y_t^{k+1}$.
To find out the representation of $Z^{k+1}$, we will use (\ref
{rek2z}). Let $\underline\alpha:=(\alpha_2, \ldots, \alpha_n)$.
Assuming\vspace*{1pt}
$\xi= \sum_{n=0}^\infty I_n (\sum_{\alpha\in V_m^n} \hat
{g}_n^{\alpha}\mathbh{1}_{\Lambda_\alpha} )$ with symmetric
$f_n=\sum_{\alpha\in V_m^n} \hat{g}_n^{\alpha}\mathbh{1}_{\Lambda_\alpha}
$ we get by
Lemma \ref{ImkD}
for $\mathbb{P}\otimes\mathbh{m}$-a.e. $(t,y,\omega) \in \,]0,s]\times\mathbb{R}\times\Omega
$ that
\begin{eqnarray*}
\mathcal{D}_{t,y} Y^{k+1}_s &=&
\mathcal{D}_{t,y} \mathbb{E}_s \xi+ \mathcal{D}_{t,y}
\mathbb {E}_s \int_s^T f
\bigl(r,X_{r},Y^k_{r},\bar{Z}^k_r
\bigr)\,\mathrm{d}r
\\
&=&\mathcal{D}_{t,y} \mathbb{E}_s \xi+
\mathcal{D}_{t,y} \int_s^T \sum
_{n=0}^\infty I_n \biggl(\sum
_{\alpha\in V_m^n}g_n^{\alpha}(r)
\mathbh{1}_{\Lambda_\alpha\cap
]0,s]^{\otimes n}
} \biggr) \,\mathrm{d}r
\\
&=& \sum_{n=1}^\infty n I_{n-1}
\biggl(\sum_{\alpha\in V_m^n} \hat {g}_n^{\alpha}(y,
\cdot) \mathbh{1}_{\Lambda_{\alpha_1}}(t) \mathbh {1}_{\Lambda_{\underline\alpha} \cap
]0,s]^{\otimes(n-1)} } \biggr)
\\
&& {}+ \int_s^T \sum
_{n=1}^\infty n I_{n-1} \biggl(\sum
_{\alpha\in
V_m^n}g_n^{\alpha}(r,y,\cdot)
\mathbh{1}_{\Lambda_{\alpha_1}}(t) \mathbh{1}_{\Lambda_{\underline\alpha} \cap
]0,s]^{\otimes(n-1)} } \biggr) \,\mathrm{d}r,
\end{eqnarray*}
where we again have chosen symmetric integrands $\sum_{\alpha\in V_m^n}
g_n^{\alpha}(r)\mathbh{1}_{\Lambda_\alpha\cap]0,s]^{\otimes n} }$.
One easily checks the $\LLL_2$-convergence
\begin{eqnarray*}
\lim_{s \searrow t}\mathcal{D}_{t,y}Y^{k+1}_s
& = & \sum_{n=1}^\infty n I_{n-1}
\biggl(\sum_{\alpha\in V_m^n} \hat {g}_n^{\alpha}(y,
\cdot)\mathbh{1}_{\Lambda_{\alpha_1}}(t) \mathbh {1}_{\Lambda_{\underline\alpha} \cap
]0,t]^{\otimes(n-1)} } \biggr)
\\
&& {}+ \int_t^T \sum
_{n=1}^\infty n I_{n-1} \biggl(\sum
_{\alpha\in
V_m^n}g_n^{\alpha}(r,y,\cdot)
\mathbh{1}_{\Lambda_{\alpha_1}}(t) \mathbh{1}_{\Lambda_{\underline\alpha} \cap
]0,t]^{\otimes(n-1)} } \biggr) \,\mathrm{d}r.
\end{eqnarray*}

If we consider the c\`adl\`ag version of $\mathcal{D}_{t,y} Y^{k+1}$,
we obtain
the same expression for the pathwise limit, that is, $\mathbb{P}$-a.s.
\begin{eqnarray*}
Z^{k+1}_{t,y} & = & \lim_{s\searrow t}
\mathcal{D}_{t,y}Y^{k+1}_s
\\
& = & \sum_{n=1}^\infty n I_{n-1}
\biggl( \sum_{\alpha\in V_m^n} \biggl[\hat{g}_n^{\alpha}(y,
\cdot)+\int_t^T g_n^{\alpha}(r,y,
\cdot) \,\mathrm{d}r \biggr] \mathbh{1}_{\Lambda_{\alpha_1}}(t) \mathbh{1}_{\Lambda_{\underline
\alpha} \cap]0,t]^{\otimes(n-1)}} \biggr).
\end{eqnarray*}
\upqed\end{pf}

\section{$L_2$-variation of $(Y,Z)$}\label{sec5}

The next theorem is our main statement, which allows conclusions on the
$\LLL_2$-regularity of the solutions to BSDE (\ref{beq2}) by observing
regularity properties of
$Y_{r_k}$ for fixed time points $r_0=0<r_1<\cdots<r_m=T$.

\begin{thm}\label{mainthm}
Assume that $(A_f)$ is satisfied and $\xi\in\mathbb{H}$. Let $k\in\{
1,\ldots,m\}$ and $ \theta_k \in \,]0,1]$.
For the solution $(Y,Z)$ of (\ref{beq2}), consider the following assertions:
\begin{enumerate}[(iii)]

\item[(i)]\label{exp-of-Y} There is some $c_1>0$ such that for all $s\in
{[r_{k-1},r_k]}$,
\[
\llVert Y_{r_k}-\mathbb{E}_{s}Y_{r_k}\rrVert
^2\leq c_1(r_k-s)^{\theta_k}.
\]
\item[(ii)]\label{difference-of-Y} There is some $c_2>0$ such that for all
$r_{k-1} \le s<t \le r_k$,
\[
\llVert Y_{t}-Y_{s}\rrVert ^2\leq
c_2\int_s^t(r_k-r)^{\theta_k-1}\,\mathrm{d}r.
\]
\item[(iii)]\label{Z}There is some $c_3>0$ such that for $\lambda$-a.e.
$s\in
{[r_{k-1},r_k[}$,
\[
\llVert Z_{s,\cdot}\rrVert _{\LLL_2(\mathbb{P}\otimes\mu
)}^2\leq
c_3(r_k-s)^{\theta_k-1}.
\]
\item[(iv)]\label{difference-of-Z} There is some $c_4>0$ and a Borel set
$N_k$ with $\lambda(N_k)=0$ such that for all $s, t\in[r_{k-1},r_k[
\setminus N_k$ with $s<t$ and for all $h \in\LLL_2(\mu)$ it holds
\[
\biggl\llVert \int_\mathbb{R} (Z_{t,x}-Z_{s,x}
)h(x)\mu (\mathrm{d}x)\biggr\rrVert ^2\le\| h\|^2_{\LLL_2(\mu)}
c_4\int_s^t(r_k-r)^{\theta_k-2}\,\mathrm{d}r.
\]
\end{enumerate}
Then it holds that
\[
\mathrm{(i)} \Leftrightarrow\mathrm{(ii)}\Leftrightarrow \mathrm{(iii)}
\Rightarrow \mathrm{(iv)}.
\]
\end{thm}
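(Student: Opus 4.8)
\emph{Plan of the proof.} Fix $k\in\{1,\dots,m\}$ and set $T_k:=r_k-r_{k-1}$; all estimates are on $\I_k$. The whole argument is driven by the martingale representation $Y_{r_k}=\E Y_{r_k}+\int_{]0,r_k]\times\R}z_{t,x}\,M(dt,dx)$: writing $\zeta_k(s):=\|z_{s,\cdot}\|^2_{\Ltwo(\PP\otimes\mu)}$ one has $\Phi_k(s):=\|Y_{r_k}-\E_sY_{r_k}\|^2=\int_s^{r_k}\zeta_k(r)\,dr$, so (i) reads $\Phi_k(s)\le c_1(r_k-s)^{\theta_k}$. The decisive structural input from $\xi\in\mathbb H$ enters through Theorem~\ref{Hthm}: only the cuboids $\I_\alpha$ with $\alpha_i\le k$ survive the intersection with $]0,r_k]^n$, so the symmetric kernel $f_n^{r_k}=\sum_{\alpha:\,\alpha_i\le k}\varphi_n^\alpha(r_k)\one_{\I_\alpha}$ of $Y_{r_k}$ has the feature that $f_n^{r_k}((s,x),\cdot)$ does not depend on $s$ as long as $s\in\I_k$; hence in $z_{s,x}=\sum_{n\ge1}nI_{n-1}\bigl(f_n^{r_k}((s,x),\cdot)\one_{]0,s]^{n-1}}\bigr)$ the only $s$-dependence is the growing indicator $\one_{]0,s]^{n-1}}$, and therefore \emph{$\zeta_k$ is non-decreasing on $\I_k$}. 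A second ingredient is a clean decomposition of $Z$: comparing the martingale $N_t:=Y_t+\int_0^tf(r,X_r,Y_r,\bar Z_r)\,dr=Y_0+\int_{]0,t]\times\R}Z_{r,x}M(dr,dx)$ with $t\mapsto\E_t[\int_0^{r_k}f(r,X_r,Y_r,\bar Z_r)\,dr]$ (using $\E_t[\int_0^{r_k}f\,dr]=\int_0^tf\,dr+\int_t^{r_k}\E_tf\,dr$) one gets on $]0,r_k]$ that $Z_{s,x}=z_{s,x}+\eta_{s,x}$, where $\eta$ is the representation integrand of $G_k:=\int_0^{r_k}f(r,X_r,Y_r,\bar Z_r)\,dr$, and
\[
\int_s^{r_k}\|\eta_{r,\cdot}\|^2\,dr=\Bigl\|\int_s^{r_k}\bigl(f-\E_sf\bigr)(r,X_r,Y_r,\bar Z_r)\,dr\Bigr\|^2\le C(r_k-s)^2+C(r_k-s)\int_s^{r_k}\|Z_{r,\cdot}\|^2_{\Ltwo(\PP\otimes\mu)}\,dr,
\]
since $|f(r,x,y,z)|\le\|f(\cdot,0,0,0)\|_\infty+L_f(|x|+|y|+|z|)$, $\|Y\|_S+\sup_r\|X_r\|<\infty$ and $|\bar Z_r|\le\|\kappa'\|_{\Ltwo(\mu)}\|Z_{r,\cdot}\|_{\Ltwo(\mu)}$. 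Finally I use the elementary interpolation inequalities valid for $\theta_k\in]0,1]$ and $r_{k-1}\le s<t\le r_k$: $(t-s)(r_k-s)^{\theta_k-1}\le\int_s^t(r_k-r)^{\theta_k-1}dr\le\theta_k^{-1}(r_k-s)^{\theta_k}$ and $(t-s)^2\le T_k^{2-\theta_k}\int_s^t(r_k-r)^{\theta_k-1}dr$, which freely interchange the scales $(r_k-s)^{\theta_k}$, $(r_k-s)^{\theta_k-1}$ and $\int_s^t(r_k-r)^{\theta_k-1}dr$.

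\emph{The implications not using $\mathbb H$.} From $Y_t-Y_s=-\int_s^tf(r,X_r,Y_r,\bar Z_r)\,dr+\int_{]s,t]\times\R}Z_{r,x}M(dr,dx)$ and the same bound on $\|\int_s^tf\,dr\|^2$ as above I get $\|Y_t-Y_s\|^2\le C(t-s)^2+C(t-s)\int_s^t\|Z_{r,\cdot}\|^2dr+2\int_s^t\|Z_{r,\cdot}\|^2dr$, and, by the It\^o isometry, $\int_s^t\|Z_{r,\cdot}\|^2dr\le2\|Y_t-Y_s\|^2+C(t-s)^2+C(t-s)\int_s^t\|Z_{r,\cdot}\|^2dr$. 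The first inequality together with (iii) and the interpolation inequalities gives (ii); the second, after absorbing the last term for $t-s$ small (and covering $\I_k$ by finitely many short subintervals otherwise) and invoking (ii), gives $\int_s^t\|Z_{r,\cdot}\|^2dr\le C\int_s^t(r_k-r)^{\theta_k-1}dr$, whence, dividing by $t-s$ and letting $t\downarrow s$ at the right Lebesgue points of $r\mapsto\|Z_{r,\cdot}\|^2\in\Lp_1(\I_k)$ (using continuity of $r\mapsto(r_k-r)^{\theta_k-1}$), (iii) holds for $\lambda$-a.e.\ $s$. Thus (ii)$\Leftrightarrow$(iii). Likewise (iii)$\Rightarrow$(i): $\zeta_k(r)=\|z_{r,\cdot}\|^2\le2\|Z_{r,\cdot}\|^2+2\|\eta_{r,\cdot}\|^2$, and integrating over $[s,r_k]$, inserting (iii) and the $\eta$-bound (which under (iii) is $\le C(r_k-s)^{1+\theta_k}$), yields $\Phi_k(s)=\int_s^{r_k}\zeta_k\le c_1(r_k-s)^{\theta_k}$.

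\emph{The implication using $\mathbb H$: (i)$\Rightarrow$(iii).} Here the monotonicity of $\zeta_k$ enters through the \emph{midpoint trick}: from (i),
\[
\zeta_k(s)\cdot\tfrac{r_k-s}{2}\le\int_s^{(s+r_k)/2}\zeta_k(r)\,dr\le\Phi_k(s)\le c_1(r_k-s)^{\theta_k},\qquad\text{hence}\qquad\|z_{s,\cdot}\|^2=\zeta_k(s)\le 2c_1(r_k-s)^{\theta_k-1}.
\]
It remains to control $\|\eta_{s,\cdot}\|^2$ at the same rate. From $\int_s^{r_k}\|Z_{r,\cdot}\|^2dr\le2\Phi_k(s)+2\int_s^{r_k}\|\eta_{r,\cdot}\|^2dr$ and the $\eta$-bound one first bootstraps $\int_s^{r_k}\|Z_{r,\cdot}\|^2dr\le C(r_k-s)^{\theta_k}$ and so $\int_s^{r_k}\|\eta_{r,\cdot}\|^2dr\le C(r_k-s)^{1+\theta_k}$; the pointwise estimate $\|\eta_{s,\cdot}\|^2\le C(r_k-s)^{\theta_k-1}$ is then extracted from this integral bound exactly as for $z$: the representation integrands of $f(r,X_r,Y_r,\bar Z_r)$, which lies in $\mathbb H_r$ by Lemma~\ref{Baumgartner-Geiss}, inherit the piecewise-constant-in-time structure of $\mathbb H$ and hence a monotonicity on $\I_k$ analogous to that of $\zeta_k$, which upgrades the integral bound to the pointwise one (this is the place where the preparatory estimates on the effect of $\mathbb H$ on the generator are invoked). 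Combining, $\|Z_{s,\cdot}\|^2\le2\zeta_k(s)+2\|\eta_{s,\cdot}\|^2\le c_3(r_k-s)^{\theta_k-1}$, i.e.\ (iii). Together with the two equivalences above this gives (i)$\Leftrightarrow$(ii)$\Leftrightarrow$(iii).

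\emph{(iii)$\Rightarrow$(iv), and the main obstacle.} Differentiate \eqref{beq2} by Theorem~\ref{diffthm}. For $s,t\in\I_k$ with $s<t$: $\D_{s,\cdot}\xi=\D_{t,\cdot}\xi$ (both Malliavin parameters lie in the single interval $\I_k$), and the generators $F_{s,x},F_{t,x}$ of \eqref{U-V-equation} agree on $[t,T]$, so uniqueness for \eqref{U-V-equation} gives $\D_{s,x}Y_r=\D_{t,x}Y_r$ and $\D_{s,x}Z_{r,y}=\D_{t,x}Z_{r,y}$ for $r\in[t,T]$; in particular $Z_{t,x}=\D_{t,x}Y_t=\D_{s,x}Y_t$. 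Evaluating \eqref{diffrep} at $t$ and at $s$,
\[
Z_{t,x}-Z_{s,x}=-\int_s^tF_{s,x}\Bigl(r,\D_{s,x}Y_r,\int_\R\D_{s,x}Z_{r,y}\kappa(dy)\Bigr)dr+\int_{]s,t]\times\R}\D_{s,x}Z_{r,y}\,M(dr,dy).
\]
Testing with $h\in\Ltwo(\mu)$ and applying a stochastic Fubini theorem to exchange $\int_\R(\cdot)h(x)\mu(dx)$ with $\int_{]s,t]\times\R}(\cdot)M$, the drift part is $\le\|h\|^2_{\Ltwo(\mu)}(t-s)\int_s^t\E\int_\R|F_{s,x}(\cdots)|^2\mu(dx)\,dr$, which by the Lipschitz bound on $F_{s,x}$ and the a~priori estimate for \eqref{U-V-equation} (integrated over $x$, using that $\|\D_{s,\cdot}\xi\|^2_{\Ltwo(\PP\otimes\mu)}$ is constant in $s$ on $\I_k$) is $\le\|h\|^2C_k(t-s)\le\|h\|^2C_k'\int_s^t(r_k-r)^{\theta_k-2}dr$; the stochastic-integral part is $\le2\|h\|^2\int_s^t\|\D_{s,\cdot}Z_{r,\cdot}\|^2_{\Ltwo(\PP\otimes\mu\otimes\mu)}dr$, and since for each $x$ the pair $(\D_{s,x}Y,\D_{s,x}Z)$ solves \eqref{U-V-equation} with terminal value $\D_{s,x}\xi\in\mathbb H$, the equivalence (i)$\Leftrightarrow$(iii) applied one Malliavin order higher — with $\theta_k$ inherited from (iii) for $(Y,Z)$ — gives $\|\D_{s,\cdot}Z_{r,\cdot}\|^2\le C(r_k-r)^{\theta_k-2}$ for a.e.\ $r$; collecting the null sets produced by the a.e.\ statements into one Borel set $N_k$ with $\lambda(N_k)=0$ completes (iv). The genuine obstacle throughout is precisely this passage from integral to pointwise control of the generator-generated pieces ($\eta$ in (i)$\Rightarrow$(iii), $\|\D_{s,\cdot}Z_{r,\cdot}\|^2$ in (iv)): it rests on the fact — of which the non-decrease of $\zeta_k$ is the cleanest instance — that the piecewise-constant-in-time chaos structure of $\mathbb H$ is inherited both by the solution and by the generator along the grid; the midpoint trick is then only the short final step.
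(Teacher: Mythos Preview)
Your monotonicity observation for $\zeta_k(s)=\|z_{s,\cdot}\|^2_{\Ltwo(\PP\otimes\mu)}$ (where $z$ is the representation integrand of $Y_{r_k}\in\mathbb H$) is correct and elegant; together with the midpoint trick it reproduces precisely the content of Lemma~\ref{lemmaHsmoothcor}. The decomposition $Z=z+\eta$ is also natural. However, two steps are genuine gaps.

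\textbf{The $\eta$-part in (i)$\Rightarrow$(iii).} You claim that the representation integrand $\eta$ of $G_k=\int_0^{r_k} f(r,X_r,Y_r,\bar Z_r)\,dr$ inherits ``a monotonicity on $\I_k$ analogous to that of $\zeta_k$''. This is not justified, and in general it fails: while each $f(r,X_r,Y_r,\bar Z_r)\in\mathbb H_r$, the integral $G_k$ is \emph{not} in $\mathbb H$, because its symmetric kernels are of the form $\int_{\max_i t_i}^{r_k} g_n^\alpha(r)\,dr\cdot \one_{\I_\alpha}(t_1,\dots,t_n)$ and hence depend on $\max_i t_i$ continuously, not only through the cuboid. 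Consequently the $s$-dependence of $\eta_{s,x}$ is not confined to the growing indicator $\one_{]0,s]^{n-1}}$, and the midpoint trick cannot be applied to $\|\eta_{s,\cdot}\|^2$. The integral bound $\int_s^{r_k}\|\eta_{r,\cdot}\|^2dr\le C(r_k-s)^{1+\theta_k}$ that you correctly derive gives no pointwise control without such a structural input. In the paper this is handled by the Malliavin route: one writes $\eta_{s,x}=\E_s\int_s^{r_k}\D_{s,x}f(r,\cdot)\,dr$, uses the Lipschitz bound $|\D_{s,x}f|\le L_f(1+|\D_{s,x}Y_r|+|\D_{s,x}\bar Z_r|)$, and then closes via a Gronwall argument (relation \eqref{DYandDbarZ-estimate}) that crucially relies on Lemma~\ref{savinglemma} to represent $\D_{t,\cdot}\bar Z_r$ in a form amenable to iteration. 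None of this is captured by a monotonicity claim.

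\textbf{The bound on $\|\D_{s,\cdot}Z_{r,\cdot}\|^2$ in (iii)$\Rightarrow$(iv).} Your proposal to obtain $\|\D_{s,\cdot}Z_{r,\cdot}\|^2_{\Ltwo(\PP\otimes\mu\otimes\mu)}\le C(r_k-r)^{\theta_k-2}$ by applying ``the equivalence (i)$\Leftrightarrow$(iii) one Malliavin order higher'' does not go through. The theorem requires an exponent in $]0,1]$; to land on $\theta_k-2$ via (iii) one would need $\theta_k'=\theta_k-1\le 0$. Moreover, the differentiated BSDE \eqref{U-V-equation} has a random generator $F_{s,x}$ depending on $(Y,\bar Z)$, so it is not of the form \eqref{beq2} to which the theorem applies. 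In the paper the $(r_k-r)^{\theta_k-2}$ rate comes instead from the explicit chaos estimate of Lemma~\ref{Hsmoothlemma},
\[
\|\E_t\D_{t,\cdot}Y_{r_k}-\E_s\D_{s,\cdot}Y_{r_k}\|^2_{\Ltwo(\PP\otimes\mu)}\le 4\int_s^t\frac{\|Y_{r_k}-\E_rY_{r_k}\|^2}{(r_k-r)^2}\,dr,
\]
which under (i) immediately yields the integrand $(r_k-r)^{\theta_k-2}$; the generator contribution is then controlled via \eqref{Df-estmated-again} and Lemma~\ref{savinglemma}. This second-order chaos computation is the substantive content of (iv) and cannot be replaced by a recursive appeal to the first-order equivalence.
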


\begin{rem}
\label{frac-smooth-comment}
(i) Analogously to \cite{GGG}, Definition 1, we may introduce the
concept of \textit{path-dependent fractional smoothness}:
fix $\Theta=(\theta_1,\ldots,\theta_m) \in\,]0,1[^m$.
If (Y, Z) is the solution to BSDE~(\ref{beq2}) with generator f and
terminal condition $\xi\in\mathbb{H}$, we
let
\[
(\xi,f) \in B^\Theta_{2,\infty}(X)
\]
provided that there is some $c > 0$ such that
\[
\llVert Y_{r_k}-\mathbb{E}_{s}Y_{r_k}\rrVert
^2\leq c (r_k-s)^{\theta
_k},\qquad  r_{
k-1} \le s<
r_k, k=1,\ldots,m.
\]
If $ f=0$ we simply write $\xi\in B^\Theta_{2,\infty}(X)$. If,
moreover, $T=1$ and $m=1$ then the space
$B^\Theta_{2,\infty}(X)$ can be understood as the
real interpolation space $(\LLL_2,{\mathbb{D}_{1,2}})_{\theta
_1,\infty}$ which
describes {\it fractional smoothness}. For $\xi= \sum_{n=0}^\infty
I_n(f_n)$ $\in\mathbb{H}$ set $T_\xi(t) := \| \mathbb{E}_t \xi\|^2
= \sum_{n=0}^\infty\| I_n(f_n)\|^2 t^n$, and using the ideas of \cite{GeissHujo-preprint}, Proposition 3.2 and
\cite{GeissHujo}, formula (13), one can conclude that
\begin{eqnarray*}
\|\xi\|_{(\LLL_2,{\mathbb{D}_{1,2}})_{\theta_1,\infty}} &\sim_c& \|\xi\| + \sup
_{0 \le
t <1} (1-t)^{{-\theta_1}/{2}} \sqrt{T_\xi(1) -
T_\xi(t ) }
\\
&=& \|\xi\| + \sup_{0 \le t <1} (1-t)^{{-\theta_1}/{2}} \| \xi -
\mathbb{E} _t \xi\|.
\end{eqnarray*}
By assumption, we have $\| \xi- \mathbb{E}_t \xi\|^2 \le c
(1-t)^{\theta_1}$
hence the RHS is finite. From the lexicographical scale of the real
interpolation spaces (see \cite{bergh-lofstrom} or \cite
{bennet-sharp}), it follows
\[
(\LLL_2,{\mathbb{D}_{1,2}})_{\theta_1',2}
\supseteq(\mathit{L}_2,{\mathbb{D}_{1,2}})_{\theta_1,\infty}\qquad
\mbox{for all } \theta_1' \in\,]0,\theta_1[.
\]
Especially, $\| \xi- \mathbb{E}_t \xi\|^2 \le c (1-t)^{\theta_1}$
implies that
$ \sum_{n=0}^\infty n^{\theta_1'} \| I_n(f_n)\|^2 < \infty$ for all
$\theta_1' \in\,]0,\theta_1[$ (see \cite{GeissHujo}, Remark A.1).

(ii) In\vspace*{1.5pt} general (iv) $\nRightarrow$ (iii): let $(p_n)_{n=1}^\infty$ be an
ONB in $\LLL_2(\mu)$.
For simplicity, assume $T=1, m=1, f\equiv0$ and $\xi= \sum_{n=0}^\infty I_n(g_n \mathbh{1}_{]0,T]}^{\otimes n})$ so that
\[
Z_{s,x}= \sum_{n=1}^\infty n
I_{n-1}\bigl(g_n(x, \cdot) \mathbh{1} _{]0,s]}^{\otimes( n-1)}
\bigr).
\]
Setting $g_n := \beta_n (n!)^{-{1}/{2}} p_n^{\otimes n}$ we have
\[
\llVert Z_{s,\cdot}\rrVert _{\LLL_2(\mathbb{P}\otimes\mu
)}^2 = \sum
_{n=1}^\infty n \beta^2_n
s^{n-1}.
\]
For a sequence $(\beta_n)$ such that $\beta^2_1:=1$, $\beta^2_2:=0$,
$\beta^2_n:=\frac{1}{n ({\log(n-1)})^{2}}, n\ge3 $, we use Lemma A.1
of \cite{Seppaelae} which states that
\[
1 + \sum_{n=2}^\infty(\log n)^{-2}
s^n \sim_c \frac{1}{(1-s)(1-\log
(1-s))^2}
\]
(where for some $c \ge1$ and $A,B > 0$ the expression $A \sim_c B$ is
a short notation for $c^{-1} A \le B \le c A$). Hence
\[
\llVert Z_{s,\cdot}\rrVert _{\LLL_2(\mathbb{P}\otimes\mu
)}^2\sim
_c \frac{1}{(1-\log(1-s))^2}(1-s)^{-1},
\]
so that there does not exist any $\theta\in{]0,1]}$ for which property
(iii) holds.

But for any $h= \sum_{n=1}^\infty\alpha_n p_n$ such that $\|h\|
^2_{\LLL_2
(\mu)}= \sum_{n=1}^\infty\alpha_n^2 =1$ we have
\[
\biggl\llVert \int_\mathbb{R}
(Z_{t,x}-Z_{s,x} )h(x)\mu (\mathrm{d}x)\biggr\rrVert ^2
= \sum_{n=3}^\infty\alpha^2_n
\frac{1}{ ({\log( n-1)})^{2}} \bigl(t^{n-1} - s^{n-1}\bigr) \le
\frac{1}{ ({\log2})^{2}},
\]
which means that (iv) is fulfilled for any $\theta\in{]0,1]}$.
\end{rem}

We prepare some lemmas to prove Theorem \ref{mainthm}.

\begin{lem}\label{Hsmoothlemma}
Let $\eta\in\mathbb{H}\cap\mathbb{D}_{1,2}$ and $k\in\{1,\ldots
,m\}$.
Then for $\lambda$-a.e. $s,t\in{]r_{k-1},r_k[}$ with $s<t$ it holds
\[
\llVert \mathbb{E}_{t}\mathcal{D}_{t,\cdot}\eta-\mathbb
{E}_{s}\mathcal{D}_{s,\cdot}\eta\rrVert ^2_{\mathit{L}_2(\mathbb{P}\otimes\mu)}
\leq4 \int_s^t \frac{\|\mathbb
{E}_{r_k}\eta-\mathbb{E}
_r\eta\|^2}{(r_k-r)^2}\,\mathrm{d}r.
\]
\end{lem}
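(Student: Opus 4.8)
The plan is to work with the chaos expansions of $\eta$ and of its Malliavin derivative, exploiting the defining property of $\mathbb{H}$: on each cuboid $\I_\alpha$ the kernels $f_n$ do not depend on the time variables, so that $\D_{t,\cdot}\eta$ does not change as $t$ ranges over a single interval $\I_k$. Write $\eta=\sum_{n=0}^\infty I_n(f_n)$ with $f_n((t_1,x_1),\dots,(t_n,x_n))=\sum_{\alpha\in V_m^n}g_n^\alpha(x_1,\dots,x_n)\one_{\I_\alpha}(t_1,\dots,t_n)$ symmetric. Then for $t\in\I_k$,
\[
\D_{t,v}\eta=\sum_{n\ge1}nI_{n-1}\Big(\sum_{\alpha:\alpha_1=k}g_n^\alpha(v,\cdot)\one_{\I_{\ual}}\Big),
\]
and by Lemma \ref{ImkD}(i) (applied to $\E_t$) together with the formula $\E_t I_{n-1}(h)=I_{n-1}(h\one_{[0,t]^{n-1}})$, one gets $\E_t\D_{t,v}\eta=\sum_{n\ge1}nI_{n-1}\big(\sum_{\alpha:\alpha_1=k}g_n^\alpha(v,\cdot)\one_{\I_{\ual}\cap]0,t]^{n-1}}\big)$. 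Hence both $\E_t\D_{t,\cdot}\eta$ and $\E_s\D_{s,\cdot}\eta$ (for $s,t\in]r_{k-1},r_k[$) have the \emph{same} kernels $g_n^\alpha(v,\cdot)$ and differ only through the cut-offs $\one_{]0,t]^{n-1}}$ versus $\one_{]0,s]^{n-1}}$.

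Next I would compute the left-hand norm explicitly via orthogonality of multiple integrals. Using $\|\cdot\|^2_{\Ltwo(\PP\otimes\mu)}=\int_\R\|\cdot\|^2_{\Ltwo}\mu(dv)$ and the isometry,
\[
\big\|\E_t\D_{t,\cdot}\eta-\E_s\D_{s,\cdot}\eta\big\|^2_{\Ltwo(\PP\otimes\mu)}
=\sum_{n\ge1}n^2(n-1)!\int_\R\Big\|\sum_{\alpha:\alpha_1=k}g_n^\alpha(v,\cdot)\big(\one_{\I_{\ual}\cap]0,t]^{n-1}}-\one_{\I_{\ual}\cap]0,s]^{n-1}}\big)\Big\|^2_{\Ltwo^{\,n-1}}\mu(dv).
\]
Because $s<t$ both lie in $\I_k=]r_{k-1},r_k]$, the difference of indicator cut-offs, restricted to a cuboid $\I_{\ual}$ with $\ual\in V_m^{n-1}$, is supported where some coordinate lies in $]s,t]$; a telescoping/union-bound estimate on $]0,t]^{n-1}\setminus]0,s]^{n-1}$ will control it by its "one coordinate in $]s,t]$" pieces. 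On the other side, I would expand $\|\E_{r_k}\eta-\E_r\eta\|^2$: by the same conditional-expectation rule this equals $\sum_n n!\,\|f_n(\one_{]0,r_k]^n}-\one_{]0,r]^n})\|^2_{\Ltwo^n}$, and since $f_n$ lives on the cuboids $\I_\alpha$ and $r\in]r_{k-1},r_k[$, only the coordinates hitting $]r,r_k]$ inside intervals $\I_j$ with $r_{k-1}<j$... — more precisely the relevant mass is $\sum_n n!\sum_{\alpha}\|g_n^\alpha\|^2_{\Ltwo(\mu^{\otimes n})}\cdot(\text{product of interval lengths, one of them being }|\I_{\al_i}\cap]r,r_k]|)$. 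Integrating $1/(r_k-r)^2$ against such a piece and summing should reproduce, up to the factor $4$, the left-hand side; the combinatorial factor $n^2(n-1)!$ versus $n!$ on the right accounts for the "choice of which coordinate is the distinguished one'', i.e. $n^2(n-1)! = n\cdot n!$, and a coordinate-by-coordinate Fubini over $]s,t]$ with a change to the variable $r_k-r$ produces the integral $\int_s^t(r_k-r)^{-2}dr$ after bounding $(r_k-r)^{-2}$ from below on subintervals.

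Concretely, the core inequality I want is: for a single cuboid $\I_{\ual}$ contained in $]0,r_k]^{n-1}$, writing $p$ for the number of its factors equal to $\I_k$,
\[
\big\|\one_{\I_{\ual}\cap]0,t]^{n-1}}-\one_{\I_{\ual}\cap]0,s]^{n-1}}\big\|^2_{\Ltwo(\lambda^{n-1})}\le (n-1)|\I_{\ual}|_*\,(t-s)\le C\int_s^t(r_k-r)^{\,\mathrm{corr}}dr,
\]
where $|\I_{\ual}|_*$ omits one $\I_k$-length, and I relate this to the $r$-integral of the $\eta$-increment. The cleanest route is probably to \emph{not} keep track of $p$ explicitly but to dominate the difference of cut-offs on $]0,t]^{n-1}\setminus]0,s]^{n-1}$ by $\sum_{j}\one_{\{(t_1,\dots,t_{n-1}):\,t_j\in]s,t],\ t_i\le r_k\ \forall i\}}$, integrate, and recognize the resulting sum over $n$ as bounded by $\int_s^t\|\D\eta\|^2$-type quantities which in turn equal $\sum_n n\cdot n!\,\|f_n\|^2(\text{mass in }]r,r_k])$ — this is exactly the derivative of $r\mapsto\|\E_r\eta\|^2$ pattern and hence ties to $\|\E_{r_k}\eta-\E_r\eta\|^2$. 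I expect the main obstacle to be precisely this bookkeeping: matching the combinatorial weights ($n^2(n-1)!$ from differentiating, $n!$ from the increment, and the factor coming from "which of the $n-1$ resp.\ $n$ coordinates carries the $]s,t]$ or $]r,r_k]$ window''), and showing the pointwise domination $(t-s)\le \int_s^t(r_k-r)^{-2}(r_k-r)^2\,\text{-weight}\ dr$ survives after the telescoping — i.e.\ getting the constant down to $4$ rather than something $n$-dependent. The symmetry of the $f_n$ and the fact that $[r_{k-1},r_k]$ is a single partition interval (so no coordinate straddles a partition point) are what make the constant uniform in $n$.
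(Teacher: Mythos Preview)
Your setup is correct and matches the paper's approach: expand $\eta$ in chaos, use that $\D_{t,\cdot}\eta$ is constant in $t$ on $\I_k$, and compute the left-hand norm via the isometry. After the reduction to equivalence classes of multi-indices (which you only hint at), what one needs is a pointwise inequality relating, for each $p=\gamma_k(\ual)\ge 1$,
\[
(t-r_{k-1})^p-(s-r_{k-1})^p \qquad\text{to}\qquad \int_s^t\frac{(r_k-r_{k-1})^{p+1}-(r-r_{k-1})^{p+1}}{(r_k-r)^2}\,dr,
\]
with a constant \emph{independent of $p$} (hence of $n$). Your ``telescoping / union-bound'' on $]0,t]^{n-1}\setminus]0,s]^{n-1}$ does not deliver this: it gives at best $(t-r_{k-1})^p-(s-r_{k-1})^p\le p(t-s)(r_k-r_{k-1})^{p-1}$, and the factor $p$ cannot be absorbed. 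You correctly flag this as the ``main obstacle'', but you do not resolve it, and the combinatorics will not save you: the ratio of the LHS weight $n\cdot n!\cdot\frac{(n-1)!}{\gamma_1(\ual)!\cdots\gamma_k(\ual)!}$ to the RHS weight $n!\cdot\frac{n!}{\gamma_1(\al)!\cdots\gamma_k(\al)!}$ (with $\al=(k,\ual)$) is exactly $\gamma_k(\al)=p+1$, so the analytic step must supply a compensating factor $\tfrac{1}{p+1}$.

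The paper's missing ingredient is precisely such an inequality. Write $(t-r_{k-1})^p-(s-r_{k-1})^p=\int_s^t p(r-r_{k-1})^{p-1}\,dr$ and use, for $r_{k-1}\le r<u<r_k$,
\[
(r-r_{k-1})^{p-1}\ \le\ \frac{1}{(r_k-u)(u-r)}\int_u^{r_k}\!\!\int_r^\rho (v-r_{k-1})^{p-1}\,dv\,d\rho,
\]
which follows from monotonicity of $v\mapsto(v-r_{k-1})^{p-1}$. Choosing $u=\tfrac{r_k+r}{2}$ (so $(r_k-u)(u-r)=\tfrac{(r_k-r)^2}{4}$) and integrating twice yields
\[
(t-r_{k-1})^p-(s-r_{k-1})^p\ \le\ \frac{4}{p+1}\int_s^t\frac{(r_k-r_{k-1})^{p+1}-(r-r_{k-1})^{p+1}}{(r_k-r)^2}\,dr.
\]
The factor $\tfrac{1}{p+1}$ is exactly what converts $\gamma_k(\ual)!$ into $\gamma_k(\al)!$ in the re-indexing $(k,\ual)\mapsto\al$, after which the right-hand side is recognised as $4\int_s^t(r_k-r)^{-2}\|\E_{r_k}\eta-\E_r\eta\|^2\,dr$. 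Without this step your argument does not close, and nothing in your outline produces the required $\tfrac{1}{p+1}$.
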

\begin{pf}
Let $\eta\in\mathbb{H}\cap\mathbb{D}_{1,2}$ be given by $\eta=
\sum_{n=0}^\infty I_n  ( \sum_{\alpha\in V_m^n}g_n^{\alpha}
\mathbh{1}_{\Lambda_\alpha}  ) $ where we assume that the functions
$f_n((t_1,x_1),\ldots,(t_n,x_n))$ are symmetric.
In the following, we use again the notation $\underline\alpha
:=(\alpha_2, \ldots,
\alpha_n)$. Since
\[
\mathcal{D}_{t,x}\eta= \sum_{n=1}^\infty
n I_{n-1} \biggl( \sum_{\alpha\in
V_m^n}g_n^{\alpha}(x,
\cdot) \mathbh{1}_{\Lambda_{\alpha_1}}(t )\mathbh{1}_{\Lambda_{\underline
\alpha}} \biggr)
\]
and since there exists a version of $\mathcal{D}\eta$ which is
constant on $
]r_{k-1},r_k[$ we get for
$s,t \in\,]r_{k-1},r_k[$ that
%
\begin{eqnarray}
&& \llVert \mathbb{E}_{t}\mathcal{D}_{t,\cdot}
\eta-\mathbb {E}_{s}\mathcal{D}_{s,\cdot}\eta\rrVert
^2_{\LLL_2(\mathbb{P}\otimes\mu)}
\nonumber
\\
\label{sumalph}
&&\quad= \Biggl\| \sum_{n=1}^\infty nI_{n-1}
\biggl(\sum_{\alpha\in V_m^n} g_n^{\alpha}
\mathbh{1}_{\Lambda_{\alpha_1}}(t) \mathbh{1}_{\Lambda_{\underline\alpha}} \bigl(\mathbh
{1}_{]0,t]}^{\otimes(n-1)}-\mathbh{1} _{]0,s]}^{\otimes(n-1)}
\bigr) \biggr) \Biggr\|^2_{\LLL_2(\mathbb{P}\otimes\mu)}
\\
\nonumber
&&\quad=\sum_{n=2}^\infty n n!
\mathop{\sum_{\alpha\in V_m^n}}_{\alpha_1=k}
\bigl\llVert g_n^{\alpha}\bigr\rrVert
^2_{\LLL_2(\mu^{\otimes n})} \lambda ^{\otimes(n-1)} \bigl(
\Lambda_{\underline\alpha}\cap\bigl({]0,t]}^{n-1} \setminus
{]0,s]}^{n-1}\bigr) \bigr).
\end{eqnarray}
For $\beta\in V_m^n$ and $1 \le l \le m$, we define
\[
\gamma_l(\beta):=\# \{i\mid\beta_i=l,\ i=1,\ldots,n\}.
\]

Notice that the intersection $ \Lambda_{\underline\alpha}\cap({]0,t]}^{n-1}
\setminus{]0,s]}^{n-1})$ is empty if $\sum_{d=k+1}^m \gamma
_{d}(\underline\alpha)
$ $>0$.
Therefore, setting
\[
\delta_{\underline\alpha}:= \mathbh{1}_{\{0\}} \Biggl( \sum
_{d=k+1}^m \gamma_{d}(\underline\alpha)
\Biggr)
\]
we have
\begin{eqnarray*}
&& \lambda^{\otimes(n-1)} \bigl(\Lambda_{\underline\alpha}\cap
\bigl({]0,t]}^{n-1} \setminus{]0,s]}^{n-1} \bigr) \bigr)
\\
&&\quad= \bigl( (t-r_{k-1} )^{\gamma_{k}(\underline\alpha
)}- (s-r_{k-1}
)^{\gamma_{k}(\underline\alpha)} \bigr)\prod_{1\le
l<k}
(r_l-r_{l-1} )^{\gamma_l
(\underline\alpha)} \delta_{\underline\alpha}.
\end{eqnarray*}
Using the symmetry of the functions in the chaos decomposition, we get that
\[
g_n^{\alpha}(x_1,\ldots,x_n)=g_n^{\pi(\alpha)}(x_{\pi(1)},
\ldots ,x_{\pi(n)})
\]
and hence $\llVert g_n^{\alpha}\rrVert ^2_{\LLL_2(\mu
^{\otimes
n})} = \|g_n^{\pi(\alpha)} \|^2_{\LLL_2(\mu^{\otimes
n})} $
for all $\pi\in\mathrm{S}(n)$ where we used the notation $\pi
(\alpha
):=(\alpha_{\pi(1)},\ldots,\alpha_{\pi(n)})$. Applying this fact, we
reduce our
summation over $\alpha\in V_m^n$ to a summation over equivalence
classes $[\alpha]\in V_m^n \slash\thicksim$ where
\[
\alpha\sim\beta\quad\Leftrightarrow\quad\exists\pi\in\mathrm{S}(n)\dvt \alpha =\pi(
\beta).
\]
Thus, since in (\ref{sumalph}) we fixed $\alpha_1$, by taking
equivalence classes for $V_m^{n-1}$ we obtain
%
\begin{eqnarray}
&& \llVert \mathbb{E}_{t}\mathcal{D}_{t,\cdot}
\eta-\mathbb {E}_{s}\mathcal{D}_{s,\cdot}\eta\rrVert
^2_{\LLL_2(\mathbb{P}\otimes\mu)}
\nonumber
\\
\label{normd}
&&\quad= \sum_{n=2}^\infty n n!\sum
_{[\underline\alpha]\in V_m^{n-1}
\slash
\thicksim}\frac{(n-1)!}{\gamma_1(\underline\alpha)!\cdots\gamma
_k(\underline\alpha)!} \bigl\llVert g_n^{(k,\underline\alpha)}
\bigr\rrVert ^2_{\LLL_2(\mu
^{\otimes n})}
\\
&& \qquad \hspace*{73pt}{}\times \bigl( (t-r_{k-1} )^{\gamma_{k}(\underline
\alpha)}- (s-r_{k-1}
)^{\gamma_{k}(\underline\alpha)} \bigr)\prod_{1
\le l<k}
(r_l-r_{l-1} )^{\gamma_l(\underline\alpha)}\delta_{\underline
\alpha},\nonumber
\end{eqnarray}
because the cardinality of the equivalence class $[\underline\alpha]$
is $\frac
{(n-1)!}{\gamma_1(\underline\alpha)!\cdots\gamma_k(\underline
\alpha)!}$ and $\gamma_l(\underline\alpha)$ is
invariant of permutations of $\underline\alpha$.
For $\gamma\ge1$, we estimate
\[
(t-r_{k-1} )^{\gamma}- (s-r_{k-1} )^{\gamma
}=
\int_s^t\gamma(r-r_{k-1})^{\gamma-1}\,\mathrm{d}r
\]
using for the integrand on the right-hand side the inequality
\[
(r -r_{k-1})^{\gamma-1} \le \frac{1}{(r_k-u)(u-r)} \int
_u^{r_k}\!\!\int_r^\rho
(v-r_{k-1})^{\gamma
-1} \,\mathrm{d}v\,\mathrm{d}\rho,
\qquad r_{k-1} \le r < u < r_k.
\]
For $u=\frac{r_k+r}{2}$ this leads to
\[
(t-r_{k-1} )^{\gamma}- (s-r_{k-1} )^{\gamma} \le
\frac{4}{(\gamma+1)}\int_s^t
\frac{ (r_k-r_{k-1})^{\gamma
+1}-(r-r_{k-1})^{\gamma+1} }{(r_k-r)^2}\,\mathrm{d}r.\vspace*{-12pt}
\]
\noindent This yields
\begin{eqnarray*}
&& \llVert \mathbb{E}_{t}\mathcal{D}_{t,\cdot}\eta-\mathbb
{E}_{s}\mathcal{D}_{s,\cdot}\eta\rrVert ^2_{\LLL_2(\mathbb{P}\otimes\mu)}
\nonumber
\\
&&\quad \le4\int_s^t\sum
_{n=1}^\infty n!\sum_{[\underline\alpha]\in
V_m^{n-1} \slash
\thicksim}
\frac{n!}{\gamma_1(\underline\alpha)!\cdots\gamma
_{k-1}(\underline\alpha)!(\gamma
_k(\underline\alpha)+1)!} \bigl\llVert g_n^{(k,\underline\alpha)}\bigr\rrVert
^2_{\LLL_2(\mu
^{\otimes n})}
\\
&& \hspace*{86pt}\qquad{}\times\frac{(r_k-r_{k-1})^{\gamma_k(\underline\alpha
)+1}-(r-r_{k-1})^{\gamma
_k(\underline\alpha)+1}}{(r_k-r)^2}\\
&&\qquad{}\times\prod_{1\le l<k}
(r_l-r_{l-1} )^{\gamma
_l(\underline\alpha)}\delta_{\underline\alpha}\,\mathrm{d}r,
\end{eqnarray*}
where for $\gamma_k(\underline\alpha)=0$ we have used
\[
0= (t-r_{k-1} )^{\gamma_{k}(\underline
\alpha)}- (s-r_{k-1} )^{\gamma_{k}(\underline\alpha)}
\le\int_s^t \frac
{(r_k-r_{k-1})-(r-r_{k-1})}{(r_k-r)^2}\,\mathrm{d}r.
\]
Because of
\[
\gamma_l(\alpha)=\gamma_l(\underline\alpha),\qquad  0<l<k
\quad\mbox{and} \quad\gamma_k(\alpha)=\gamma _k(\underline\alpha)+1
\]
if $\alpha= (k,\underline\alpha)$ we finally get
\begin{eqnarray*}
&& \llVert \mathbb{E}_{t}\mathcal{D}_{t,\cdot}\eta-\mathbb
{E}_{s}\mathcal{D}_{s,\cdot}\eta\rrVert ^2_{\LLL_2(\mathbb{P}\otimes\mu)}
\nonumber
\\
&&\quad \le4\int_s^t\sum
_{n=1}^\infty n!\sum_{[\alpha]\in V_m^n \slash
\thicksim}
\frac{n!}{\gamma_1(\alpha)!\cdots\gamma_k(\alpha)!} \bigl\llVert g_n^{\alpha}\bigr\rrVert
^2_{\LLL_2(\mu^{\otimes n})}
\\
&&\hspace*{80pt} \qquad{}\times\frac{(r_k-r_{k-1})^{\gamma_k(\alpha)}-(r-r_{k-1})^{\gamma
_k(\alpha
)}}{(r_k-r)^2}\prod_{l<k}
(r_l-r_{l-1} )^{\gamma_l(\alpha
)}\delta _{\alpha}\,\mathrm{d}r
\\
&&\quad= 4\int_s^t\sum
_{n=1}^\infty n!\sum_{\alpha\in V_m^n}
\bigl\llVert g_n^{\alpha}\bigr\rrVert ^2_{\LLL_2(\mu^{\otimes n})}
\frac
{\lambda
^{\otimes n} (\Lambda_\alpha
\cap ({]0,r_k]}^n \setminus{]0,r]}^n )
)}{(r_k-r)^2}\,\mathrm{d}r
\\
&&\quad=4\int_s^t\frac{\llVert \mathbb{E}_{r_k}\eta-\mathbb{E}_{r}\eta
\rrVert ^2}{(r_k-r)^2}\,\mathrm{d}r.
\end{eqnarray*}
\upqed\end{pf}

\begin{lem}\label{lemmaHsmoothcor}
If $\eta\in\mathbb{H}\cap\mathbb{D}_{1,2}$ and $k\in\{1,\ldots
,m\}$
then for $\lambda$-a.e. $t\in{]r_{k-1},r_k[}$
\[
\llVert \mathbb{E}_{t}\mathcal{D}_{t,\cdot}\eta\rrVert
^2_{\mathit{L}_2(\mathbb{P}\otimes
\mu)}\le\frac{\llVert \mathbb{E}_{r_k}\eta-\mathbb{E}_t\eta
\rrVert ^2}{r_k-t}.
\]
\end{lem}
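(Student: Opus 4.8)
The plan is to imitate the chaos computation from the proof of Lemma~\ref{Hsmoothlemma}. Write $\eta=\sum_{n=0}^\infty I_n\kla\sum_{\alpha\in V_m^n}g_n^{\alpha}\one_{\I_\alpha}\mer$ with symmetric integrands, and set $\ual=(\alpha_2,\dotsc,\alpha_n)$. Since $\D\eta$ has a version which is constant in the time variable on $]r_{k-1},r_k[$, and since for $t\in{]r_{k-1},r_k[}$ the factor $\one_{\I_{\alpha_1}}(t)$ only lets multi-indices with $\alpha_1=k$ survive, applying $\E_t$ and using the orthogonality of the multiple integrals together with the pairwise $\lambda^{\otimes(n-1)}$-disjointness of the cuboids $\I_{\ual}$ gives, for $\lambda$-a.e.\ $t\in{]r_{k-1},r_k[}$,
\[
\left\|\E_{t}\D_{t,\cdot}\eta\right\|^2_{\mathrm{L}_2(\PP\otimes\mu)}
=\sum_{n=1}^\infty n\,n!\sum_{\substack{\alpha\in V_m^n\\ \alpha_1=k}}\left\|g_n^{\alpha}\right\|^2_{\mathrm{L}_2(\mu^{\otimes n})}\,\lambda^{\otimes(n-1)}\kla\I_{\ual}\cap{]0,t]}^{n-1}\mer .
\]
Exactly as in Lemma~\ref{Hsmoothlemma}, I then reduce the inner sum to a sum over equivalence classes $[\ual]\in V_m^{n-1}\slash\!\thicksim$, using that $\|g_n^{\alpha}\|$ and $\lambda^{\otimes(n-1)}(\I_{\ual}\cap{]0,t]}^{n-1})$ are invariant under permutations of $\ual$, that a class has $\tfrac{(n-1)!}{\gamma_1(\ual)!\cdots\gamma_k(\ual)!}$ elements, and that $\lambda^{\otimes(n-1)}(\I_{\ual}\cap{]0,t]}^{n-1})=(t-r_{k-1})^{\gamma_k(\ual)}\prod_{l<k}(r_l-r_{l-1})^{\gamma_l(\ual)}\delta_{\ual}$, where $\delta_{\ual}=\one_{\{0\}}\kla\sum_{d>k}\gamma_d(\ual)\mer$.

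The only genuinely analytic step is the elementary bound, valid for every integer $\gamma\ge1$,
\[
(r_k-t)\,\gamma\,(t-r_{k-1})^{\gamma-1}=\int_t^{r_k}\gamma\,(t-r_{k-1})^{\gamma-1}\,dv\le\int_t^{r_k}\gamma\,(v-r_{k-1})^{\gamma-1}\,dv=(r_k-r_{k-1})^{\gamma}-(t-r_{k-1})^{\gamma},
\]
which becomes an equality when $\gamma=1$ (precisely the case $\gamma_k(\ual)=0$). Setting $\alpha=(k,\ual)$, so that $\gamma_k(\alpha)=\gamma_k(\ual)+1=:\gamma\ge1$, $\gamma_l(\alpha)=\gamma_l(\ual)$ for $l<k$ and $\delta_\alpha=\delta_{\ual}$, I multiply the class-wise terms by this inequality and simplify via $\gamma\,(\gamma-1)!=\gamma!$ and $n\,(n-1)!=n!$. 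Since $[\ual]\mapsto[(k,\ual)]$ is a bijection of $V_m^{n-1}\slash\!\thicksim$ onto exactly those classes $[\alpha]\in V_m^n\slash\!\thicksim$ carrying a nonzero factor $\lambda^{\otimes n}(\I_\alpha\cap({]0,r_k]}^n\setminus{]0,t]}^n))$ — namely those with $\gamma_k(\alpha)\ge1$ and no component exceeding $k$ — all remaining classes contributing $0$, the resulting upper bound is exactly $\tfrac{1}{r_k-t}\sum_{n}n!\sum_{\alpha\in V_m^n}\|g_n^{\alpha}\|^2_{\mathrm{L}_2(\mu^{\otimes n})}\,\lambda^{\otimes n}(\I_\alpha\cap({]0,r_k]}^n\setminus{]0,t]}^n))$, which equals $\tfrac{1}{r_k-t}\|\E_{r_k}\eta-\E_{t}\eta\|^2$ by $\E_u\eta=\sum_{n}I_n(f_n\one_{{]0,u]}^n})$. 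This proves the claim.

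The main obstacle is purely the combinatorial bookkeeping: tracking the multinomial cardinalities of the equivalence classes and checking that the bijection $[\ual]\mapsto[(k,\ual)]$ matches the terms of $\|\E_t\D_{t,\cdot}\eta\|^2$ with exactly the surviving terms of $\|\E_{r_k}\eta-\E_t\eta\|^2$; the inequality itself is trivial. A more conceptual alternative bypasses this: applying the Clark--Ocone--Haussmann formula~\eqref{COH} to $\E_{r_k}\eta,\E_t\eta\in\DD$, subtracting, and computing predictable projections with the help of Lemma~\ref{ImkD}(i), one gets $\E_{r_k}\eta-\E_{t}\eta=\int_{{]t,r_k]}\times\R}{}^{p}(\D\eta)_{s,x}M(ds,dx)$, whence the isometry gives $\|\E_{r_k}\eta-\E_{t}\eta\|^2=\int_t^{r_k}\|{}^{p}(\D_{s,\cdot}\eta)\|^2_{\mathrm{L}_2(\PP\otimes\mu)}\,ds$; as $\D\eta$ is constant in time on $]r_{k-1},r_k[$, one has $\|{}^{p}(\D_{s,\cdot}\eta)\|^2_{\mathrm{L}_2(\PP\otimes\mu)}=\|\E_{s}\D_{s,\cdot}\eta\|^2_{\mathrm{L}_2(\PP\otimes\mu)}$ for $\lambda$-a.e.\ $s$, and this quantity is nondecreasing in $s$ on the interval, so $\|\E_{r_k}\eta-\E_t\eta\|^2\ge(r_k-t)\|\E_{t}\D_{t,\cdot}\eta\|^2$.
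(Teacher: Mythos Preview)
Your main argument is correct and follows the paper's proof essentially verbatim: the paper computes the same chaos expression for $\|\E_t\D_{t,\cdot}\eta\|^2_{\Ltwo(\PP\otimes\mu)}$, then (with the shorthand ``similar to the proof of the previous lemma'') passes to equivalence classes and applies exactly your elementary bound $\gamma(t-r_{k-1})^{\gamma-1}(r_k-t)\le(r_k-r_{k-1})^\gamma-(t-r_{k-1})^\gamma$ before regrouping into the full sum identified as $\tfrac{1}{r_k-t}\|\E_{r_k}\eta-\E_t\eta\|^2$. Your Clark--Ocone--Haussmann alternative, yielding $\|\E_{r_k}\eta-\E_t\eta\|^2=\int_t^{r_k}\|\E_s\D_{s,\cdot}\eta\|^2_{\Ltwo(\PP\otimes\mu)}\,ds$ and then using monotonicity in $s$, is a genuinely different and more conceptual route that the paper does not take.
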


\begin{pf}
Similar to the proof of the previous lemma, we get (using the same notation)
\begin{eqnarray*}
&& \llVert \mathbb{E}_{t}\mathcal{D}_{t,\cdot}\eta\rrVert
^2_{\LLL_2(\mathbb
{P}\otimes\mu)}
\nonumber
\\
&&\quad = \Biggl\| \sum_{n=1}^\infty nI_{n-1}
\biggl(\sum_{\alpha\in V_m^n} g_n^{\alpha}
\mathbh{1}_{\Lambda_{\alpha_1}}(t)\mathbh{1}_{\Lambda
_{\underline\alpha}}\mathbh{1}
_{]0,t]}^{\otimes(n-1)} \biggr) \Biggr\|^2_{\LLL_2(\mathbb{P}\otimes\mu)}
\nonumber
\\
&&\quad=\sum_{n=1}^\infty n n!\mathop{\sum_{\alpha\in V_m^n}}_{\alpha_1=k} \bigl\llVert g_n^{\alpha}\bigr\rrVert
^2_{\LLL_2(\mu^{\otimes n})} (t-r_{k-1} )^{\gamma_{k}(\underline\alpha)}\prod
_{l<k} (r_l-r_{l-1}
)^{\gamma_l
(\underline\alpha)}\delta_{\underline\alpha}
\\
&&\quad \le\sum_{n=1}^\infty n!\sum
_{\alpha\in V_m^n}\bigl\llVert g_n^{\alpha
}\bigr\rrVert
^2_{\LLL_2(\mu^{\otimes n})} \frac{(r_k-r_{k-1})^{\gamma_k(\alpha)} - (t-r_{k-1})^{\gamma
_k(\alpha)}
}{r_k-t}
\prod_{l<k}
(r_l-r_{l-1} )^{\gamma_l
( \alpha)} \delta_{\alpha}
\\
&&\quad= \frac{\llVert \mathbb{E}_{r_k}\eta-\mathbb{E}_t\eta\rrVert ^2}{r_k-t}.
\end{eqnarray*}
\upqed\end{pf}

\begin{lem}\label{savinglemma}
Suppose $u \in\,]r_{k-1},T]$, $\eta\in\mathbb{H}_u\cap\mathbb
{D}_{1,2}$ and $h \in\LLL_2(\mu)$.
Then the equation
\[
\frac{\mathbb{E}_{s} [\eta I_1 (\mathbh{1}_{]s,a]}h
) ]}{a-s}=\frac
{\mathbb{E}_{s} [\eta\int_{{]s,a]}\times\mathbb
{R}}h(x)M(\mathrm{d}t,\mathrm{d}x) ]}{a-s} =\int_\mathbb{R}
\mathbb{E}_{s}\mathcal{D}_{s,x} \eta h(x)\mu(\mathrm{d}x)
\]
is satisfied $\mathbb{P}$-a.s. for $\lambda$-a.e. $r_{k-1}< s<a \le r_k
\wedge u$.
\end{lem}

\begin{pf}
By the Clark--Ocone--Haussmann formula (\ref{COH}), we express $\eta$ as
\[
\eta=\mathbb{E}\eta+\int_{{]0,u]}\times\mathbb{R}}{}^p (\mathcal{D}
\eta )_{t,x} M(\mathrm{d}t,\mathrm{d}x).
\]
Thus we can write
\begin{eqnarray*}
&& \mathbb{E}_{s} \biggl[\eta\int_{{]s,a]}\times\mathbb
{R}}h(x)M(\mathrm{d}t,\mathrm{d}x)
\biggr]
\\
&&\quad= \mathbb{E}_{s} \biggl[\int_{{]0,u]}\times\mathbb{R}}{}^p
(\mathcal{D}\eta )_{t,x} M(\mathrm{d}t,\mathrm{d}x)\int_{{]s,a]}\times\mathbb{R}}h(x)M(\mathrm{d}t,\mathrm{d}x)
\biggr]
\end{eqnarray*}
(the constant $\mathbb{E}\eta$ multiplied with $\int_{{]s,a]}\times
\mathbb{R}
}h(x)M(\mathrm{d}t,\mathrm{d}x)$ gives zero when applying $\mathbb{E}_{s}$).
Using now the conditional It\^o-isometry, we arrive at
\begin{eqnarray*}
&&\mathbb{E}_{s} \biggl[\int_{{]0,u]}\times\mathbb{R}}{}^p
(\mathcal{D}\eta )_{t,x} M(\mathrm{d}t,\mathrm{d}x)\int_{{]s,a]}\times\mathbb{R}}h(x)M(\mathrm{d}t,\mathrm{d}x)
\biggr]
\\
&&\quad=\mathbb{E}_s\int_{{]s,a]}\times\mathbb{R}}\mathbb{E}_{t}
\mathcal {D}_{t,x}\eta h(x)\mathbh{m}(\mathrm{d}t,\mathrm{d}x)
\\
&&\quad=\int_{{]s,a]}\times\mathbb{R}}\mathbb{E}_{s}\mathcal
{D}_{t,x}\eta h(x)\mathbh{m}(\mathrm{d}t,\mathrm{d}x)
\\
&&\quad=(a-s)\int_\mathbb{R}\mathbb{E}_{s}
\mathcal{D}_{s,x}\eta h(x)\mu(\mathrm{d}x)
\end{eqnarray*}
since $\mathcal{D}\eta$ is $\mathbb{P}\otimes\mathbh{m}$-a.e.
constant on the interval
${]r_{k-1},r_k \wedge u[}$ with respect to the time variable because
$\eta$ is in $\mathbb{H}_u$.
\end{pf}

\begin{pf*}{Proof of Theorem~\ref{mainthm}}
In the following, we will indicate the dependency of the constants on
certain parameters but nevertheless the constants may vary from line to
line.
\begin{longlist}[(iii)  $\Rightarrow$ (ii):]
\item[(iii) $\Rightarrow$ (ii):] This step is analogous to the proof of \cite{GGG}, Theorem 1, $(C2_l)\Rightarrow (C3_l)$. It holds
\begin{eqnarray*}
\|Y_t - Y_s \|^2 &\le& 2(t-s)\int
_s^t \bigl\|f(r,X_r,Y_r,
\bar Z_r)\bigr\|^2 \,\mathrm{d}r+2 \int_s^t
\| Z_{r,\cdot} \|^2_{\LLL_2(\mathbb{P}\otimes\mu) }\,\mathrm{d}r
\\
&\le& c\bigl(L_f,\mu(\mathbb{R}),\kappa'\bigr) \int
_s^t \bigl(1 + \|Y_r
\|^2 + \|Z_{r,\cdot
} \|^2_{\LLL_2(\mathbb{P}\otimes\mu) } \bigr)\,\mathrm{d}r
\\
&\le& c\bigl(L_f,\mu(\mathbb{R}),\kappa',
c_3\bigr) \int_s^t
(r_k -r)^{\theta_k -1} \,\mathrm{d}r
\end{eqnarray*}
since $\int_0^T \|Y_r\|^2 \,\mathrm{d}r < \infty$ and $\|\bar Z_r\| \le\|
\kappa
'\|_{\LLL_2(\mu)}\|Z_{r,\cdot}\|_{\LLL_2(\mathbb
{P}\otimes\mu)}$.

\item[(ii) $\Rightarrow$ (i):]
The argument of \cite{GGG}, Theorem 1, $(C3_l)\Rightarrow(C4_l)$, works
here as well so that we have
\[
\llVert Y_{r_k}-\mathbb{E}_sY_{r_k}\rrVert
^2 \le4 \llVert Y_{r_k}-Y_s\rrVert
^2 \le\frac{4c_2}{\theta_k} (r_k-s)^{\theta_k}.
\]

\item[(i) $\Rightarrow$ (iii):]
Step 1.  We first assume that
%
\begin{equation}
\label{smoothness-assumption}
\xi\in\mathbb{H}\cap\mathbb{D}_{1,2} \mbox{ and } f
\mbox{ satisfies } (A_f 1).
\end{equation}
Because of the relation
%
\begin{equation}
\label{Y-relation}
Y_r =\mathbb{E}_r Y_{r_k} +
\mathbb{E}_r\int_r^{r_k}f(u,X_u,Y_u,
\bar Z_u )\,\mathrm{d}u,\qquad  r_{k-1}<r<r_k,
\end{equation}
Lemma \ref{ImkD} and Theorem \ref{diffthm}(iv) we have
$\mathbb{P}$-a.s. for $\mathbh{m}$-a.e. $(t,x) \in\,]r_{k-1},r_k[\,\times\,\mathbb
{R}$ that
%
\begin{eqnarray}
Z_{t,x} &=& \lim_{r\searrow t}
\mathcal{D}_{t,x}Y_r
\nonumber
\\
&=& \lim_{r\searrow t}\mathcal{D}_{t,x} \biggl(
\mathbb{E}_r Y_{r_k} + \mathbb{E}_r\int
_r^{r_k}f(u,X_u,Y_u,\bar
Z_u )\,\mathrm{d}u \biggr)
\nonumber
\\
\label{Z-limit}
&=& \lim_{r\searrow t} \biggl( \mathbb{E}_r
\mathcal{D}_{t,x} Y_{r_k} + \mathbb{E}_r
\mathcal{D}_{t,x}\int_r^{r_k}f(u,X_u,Y_u,
\bar Z_u )\,\mathrm{d}u \biggr)
\\
&=& \lim_{r\searrow t} \biggl( \mathbb{E}_r
\mathcal{D}_{t,x} Y_{r_k} + \mathbb{E}_r\int
_r^{r_k}\mathcal{D} _{t,x}f(u,X_u,Y_u,
\bar Z_u )\,\mathrm{d}u \biggr)
\nonumber
\\
&=& \mathbb{E}_t\mathcal{D}_{t,x} Y_{r_k} +
\mathbb{E}_t \int_t^{r_k}
\mathcal{D}_{t,x}f(u,X_u,Y_u,\bar
Z_u )\,\mathrm{d}u,
\nonumber
\end{eqnarray}
where we assumed the right continuous versions of the according
expressions: Since $Y_{r_k} \in\mathbb{H}\cap\mathbb{D}_{1,2}$
the expression $\mathcal{D}Y_{r_k}$ can be realized such that it is
constant in
$t $ on $ ]r_{k-1},r_k[$ and $ (\mathbb{E}_s\mathcal{D}_{t,x}
Y_{r_k})_{s \in\,]r_{k-1},r_k[}$
is a martingale (for fixed $x$).
From Lemma \ref{lemmaHsmoothcor}, we conclude that
%
\begin{equation}
\label{first-Z-estimate}
\| Z_{t,\cdot} \|_{\LLL_2(\mathbb{P}\otimes\mu)}
\le \frac{ \| Y_{r_k}- \mathbb{E}_t Y_{r_k} \| }{ \sqrt{ r_k-t}} + \int_t^{r_k}\bigl\|
\mathcal{D} _{t,\cdot}f(u,X_u,Y_u,\bar
Z_u )\bigr\|_{\LLL_2(\mathbb{P}\otimes
\mu)}\,\mathrm{d}u.
\end{equation}
Since Lemma \ref{composition}, the Lipschitz condition and relation
(\ref{levy-derivative}) imply
%
\[
\bigl|\mathcal{D}_{t,y}f(r,X_{r},Y_{r},\bar
Z_r )\bigr| \le L_f \bigl( \mathbh {1}_{[t,T]}(r) +
|\mathcal{D} _{t,y} Y_{r}|+ |\mathcal{D}_{t,y}
\bar Z_r |\bigr), \qquad y\neq0,
\]
and
\[
 \mathcal{D}_{t,0}f(r,X_{r},Y_{r},
\bar Z_r )
= \bigl( \mathbh{1} _{[t,T]}(r)\partial_x +
\mathcal{D}_{t,0} Y_{r} \partial_y+
\mathcal{D}_{t,0} \bar Z_r \partial_z
\bigr)f(r,X_{r},Y_{r},\bar Z_r ),
\]
we have
%
\begin{equation}
\label{Df-estimate}
\bigl\|\mathcal{D}_{t,\cdot}f(r,X_{r},Y_{r},
\bar Z_r )\bigr\|_{\mathit{L}_2(\mathbb{P}\otimes\mu)} \le  L_f \bigl(\sqrt{\mu(
\mathbb{R})} + \|\mathcal{D}_{t,\cdot} Y_{r}\|_{\LLL_2(\mathbb{P}\otimes
\mu)}
+ \|\mathcal{D}_{t,\cdot} \bar Z_r \|_{\LLL_2(\mathbb{P}\otimes\mu)}
\bigr).
\end{equation}

We take the Malliavin derivative of (\ref{Y-relation}), and by Lemmas~\ref{ImkD} and~\ref{lemmaHsmoothcor}, we get
%
\begin{equation}
\label{DY-estimate}
\|\mathcal{D}_{t,\cdot} Y_r
\|_{\mathit{L}_2(\mathbb
{P}\otimes\mu)}
\le \frac{ \| Y_{r_k}- \mathbb{E}_r Y_{r_k} \| }{ \sqrt{ r_k-r}} + \int_r^{r_k} \bigl\|
\mathcal{D}_{t,\cdot}f (u,X_u,Y_u,\bar
Z_u) \bigr\|_{\mathit{L}_2(\mathbb{P}\otimes\mu
)} \,\mathrm{d}u.
\end{equation}
In order to estimate $\|\mathcal{D}_{t,\cdot} \bar Z_r \|_{\mathit{L}_2(\mathbb{P}\otimes\mu
)}$, we will use the representation
\[
\bar Z_r = \frac{\mathbb{E}_r  [ (\mathbb{E}_u Y_{r_k})
I_1(\mathbh{1}_{]r,u]} \kappa
') ]}{u-r} + \int_r^{r_k}
\frac{\mathbb{E}_r [ f
(a,X_a,Y_a,\bar Z_a)
I_1(\mathbh{1}_{]r,a]} \kappa') ]}{a-r} \,\mathrm{d}a,
\]
for $\lambda$-a.e. $u$ such that $r_{k-1}<r<u \le r_k$,
which is a consequence of equation (\ref{Z-limit}), the fact that
$\mathbb{E}
_uY_{r_k} \in\mathbb{H}_u$, $f(a, X_a,Y_a,\bar Z_a ) \in\mathbb{H}_a$
and Lemma~\ref{savinglemma}.

Hence for $r_{k-1}<t<r<u < r_k$ the `conditional' H\"older inequality implies
\begin{eqnarray*}
&& \|\mathcal{D}_{t,\cdot} \bar Z_r \|_{\LLL_2(\mathbb
{P}\otimes\mu)}
\\
&&\quad \le \biggl\| \frac{\mathbb{E}_r  [ (\mathcal{D}_{t,\cdot
}(\mathbb{E}_u Y_{r_k}) ) I_1(\mathbh{1}
_{]r,u]} \kappa')  ]}{u-r} \biggr\|_{\LLL_2(\mathbb
{P}\otimes
\mu)}
\\
&& \qquad{}+ \int_r^{r_k} \biggl\|\frac{\mathbb{E}_r  [(\mathcal
{D}_{t,\cdot} f
(a,X_a,Y_a,\bar Z_a) ) I_1(\mathbh{1}_{]r,a]} \kappa') ]}{a-r} \biggr\|
_{\LLL_2(\mathbb{P}\otimes\mu)} \,\mathrm{d}a
\\
&&\quad \le\frac{ \| \mathcal{D}_{t,\cdot}(\mathbb{E}_u Y_{r_k}) \|
_{\LLL_2(\mathbb
{P}\otimes\mu)} \|\kappa' \|_{\LLL_2(\mu)} }{\sqrt{u-r}}
\\
&& \qquad{}+ c\bigl(L_f,\mu(\mathbb{R}),\kappa'\bigr) \int
_r^{r_k} \frac{1+\|
\mathcal{D}_{t,\cdot}Y_a\|
_{\LLL_2(\mathbb{P}\otimes\mu) }+\|\mathcal{D}_{t,\cdot
}\bar Z_a \|_{\LLL_2(\mathbb{P}\otimes\mu
)}}{\sqrt{a-r}}\,\mathrm{d}a,
\end{eqnarray*}
where we used that $ ( \mathbb{E}_r I_1(\mathbh{1}_{]r,u]} \kappa
')^2  )^{{1}/{2}}
\le c(\kappa')\sqrt{u-r}$ a.s., and from (\ref{Df-estimate}) one gets
the estimate for the
integral.
Choosing $u=\frac{r_k+r}{2}$, we conclude by Lemma~\ref{lemmaHsmoothcor} the inequality
\[
\frac{ \| \mathcal{D}_{t,\cdot} (\mathbb{E}_u Y_{r_k}) \|_{\mathit{L}_2(\mathbb{P}\otimes
\mu)} }{\sqrt{u-r}} \le2 \frac{ \| Y_{r_k}- \mathbb{E}_r Y_{r_k} \| }{ r_k-r}.
\]
From the estimate (\ref{DY-estimate}) for $ \mathcal{D}_{t,\cdot}
Y_r$ and the
above one for $ \mathcal{D}_{t,\cdot} \bar Z_r$, we obtain
\begin{eqnarray*}
&& \|\mathcal{D}_{t,\cdot} Y_r \|_{\LLL_2(\mathbb{P}\otimes
\mu)}+ \|
\mathcal{D} _{t,\cdot} \bar Z_r \|_{\LLL_2(\mathbb{P}\otimes\mu)}
\\
&&\quad\le c\bigl(\kappa'\bigr) \frac{ \| Y_{r_k}- \mathbb{E}_r Y_{r_k} \| }{ r_k-r}
\\
&& \qquad{}+ c\bigl(L_f,\mu(\mathbb{R}),\kappa'\bigr) \int
_r^{r_k}\frac{1+\|\mathcal
{D}_{t,\cdot}Y_a\|
_{\LLL_2(\mathbb{P}\otimes\mu) }+\|\mathcal{D}_{t,\cdot
}\bar Z_a \|_{\LLL_2(\mathbb{P}\otimes\mu
)}}{\sqrt{a-r}}\,\mathrm{d}a
\end{eqnarray*}
which can be treated using an iteration and Gronwall's lemma (see the
proof of Lemma 4 in \cite{GGG}) in order
to get
%
\begin{equation}
\label{DYandDbarZ-estimate}
\|\mathcal{D}_{t,\cdot} Y_r
\|_{\LLL_2(\mathbb{P}\otimes
\mu)}+ \|\mathcal{D}_{t,\cdot} \bar Z_r
\|_{\LLL_2(\mathbb{P}\otimes
\mu)}
\le c\bigl(L_f,\mu(\mathbb{R}),\kappa'\bigr)
\frac{ \| Y_{r_k}- \mathbb
{E}_r Y_{r_k} \| }{ r_k-r}.
\end{equation}
Hence from (\ref{first-Z-estimate}) and (\ref{Df-estimate}), it follows
%
\begin{eqnarray}
\| Z_{t,\cdot} \|_{\LLL_2(\mathbb{P}\otimes\mu)} &\le& \frac{ \| Y_{r_k}- \mathbb{E}_t Y_{r_k} \| }{ \sqrt{ r_k-t}}
\nonumber
\\[-8pt]
\label{Z-all-summands}
\\[-8pt]
\nonumber
&& {}+ c\bigl(L_f,\mu(\mathbb{R}),\kappa'\bigr) \int
_t^{r_k} \biggl(1+ \frac{
\| Y_{r_k}-
\mathbb{E}_r Y_{r_k} \| }{ r_k-r} \biggr)\,\mathrm{d}r.
\end{eqnarray}

Step 2. Here we use Remark \ref{approximationthm} and approximate $\xi
\in\mathbb{H}$ by a sequence $(\xi_n)_n \subseteq\mathbb{H} \cap
{\mathbb{D}_{1,2}}$
and $f$ such that $(A_f)$ is fulfilled by $(f_n)_n $ satisfying $(A_f
1)$. The convergence (\ref{Yn-Znconvergence}) implies that we can find
a subsequence $(Z^{n_k})$ which we will again denote by $(Z^n)$ such
that for $\lambda$-a.e. $t \in[0,T]$
%
\begin{eqnarray}
\label{aa-Z-convergence} \bigl\llVert Z^n_{t,\cdot} - Z_{t,\cdot}
\bigr\rrVert _{\LLL_2(\mathbb
{P}\otimes\mu)}^2 \to 0 \qquad \mbox{as } n \to\infty.
\end{eqnarray}
From the first step, we conclude that (\ref{Z-all-summands}) holds for
$Z^n$ and therefore
\begin{eqnarray*}
\| Z_{t,\cdot} \|_{\LLL_2(\mathbb{P}\otimes\mu)} & \le& \bigl\| Z_{t,\cdot}-
Z^n_{t,\cdot} \bigr\|_{\LLL_2(\mathbb{P}\otimes\mu)} + \bigl\| Z^n_{t,\cdot}
\bigr\|_{\LLL_2(\mathbb{P}\otimes\mu)}
\\
&\le& \bigl\| Z_{t,\cdot}- Z^n_{t,\cdot} \bigr\|_{\LLL_2(\mathbb
{P}\otimes\mu)} +
\frac{ \| Y^n_{r_k}- \mathbb{E}_t Y^n_{r_k} \| }{ \sqrt{ r_k-t}}
\nonumber
\\
&& {}+ c\bigl(L_f,\mu(\mathbb{R}),\kappa'\bigr) \int
_t^{r_k} \biggl( 1+ \frac{
\|
Y^n_{r_k}- \mathbb{E}_r Y^n_{r_k} \| }{ r_k-r} \biggr)\,\mathrm{d}r
\nonumber
\\
&\le& \frac{ \| Y_{r_k}- \mathbb{E}_t Y_{r_k} \| }{ \sqrt{ r_k-t}} + c\bigl(L_f,\mu (\mathbb{R}),
\kappa'\bigr) \int_t^{r_k} \biggl( 1
+ \frac{ \| Y_{r_k}-
\mathbb{E}_r Y_{r_k}
\| }{ r_k-r} \biggr) \,\mathrm{d}r
\nonumber
\\
&& {}+ \bigl\| Z_{t,\cdot}- Z^n_{t,\cdot} \bigr\|_{\LLL_2(\mathbb
{P}\otimes\mu)} +
\frac{
2\| Y_{r_k}- Y^n_{r_k} \| }{ \sqrt{ r_k-t}}
\nonumber
\\
&& {}+ c\bigl(L_f,\mu(\mathbb{R}),\kappa' \bigr) \int
_t^{r_k} \frac{ \|
Y_{r_k} - \mathbb{E}_r
Y_{r_k} -(Y^n_{r_k} - \mathbb{E}_r Y^n_{r_k} ) \| }{ r_k-r} \,\mathrm{d}r.
\end{eqnarray*}
For sufficiently large $n$ the terms in the second last line are
arbitrarily small. For the last term, we use the relation (\ref
{Y-relation}) and get
%
\begin{eqnarray}
&& \int_t^{r_k} \frac{ \| Y_{r_k} - \mathbb{E}_r Y_{r_k} -(Y^n_{r_k}
- \mathbb{E}_r
Y^n_{r_k} ) \| }{ r_k-r}
\,\mathrm{d}r
\nonumber
\\
&&\quad \le \int_t^{r_k} \frac{\int_r^{r_k} \| f(u,X_u,Y_u,\bar Z_u ) -
f(u,X_u,Y^n_u,\bar Z^n_u )\| \,\mathrm{d}u }{ r_k-r} \,\mathrm{d}r
\nonumber
\\
\label{arbitrarily-small}
&&\quad= \int_t^{r_k}\!\! \int_t^u
\frac{1 }{ r_k-r} \,\mathrm{d}r \bigl\| f(u,X_u,Y_u,\bar
Z_u ) - f\bigl(u,X_u,Y^n_u,
\bar Z^n_u \bigr)\bigr\| \,\mathrm{d}u
\\
&&\quad\le \biggl[\int_t^{r_k} \biggl( \int
_t^u \frac{1 }{ r_k-r} \,\mathrm{d}r
\biggr)^2 \,\mathrm{d}u \biggr]^{{1}/{2}}
\nonumber
\\
\nonumber
&& \qquad {}\times \biggl[ \int_t^{r_k} \bigl\|
f(u,X_u,Y_u,\bar Z_u ) - f
\bigl(u,X_u,Y^n_u,\bar Z^n_u
\bigr)\bigr\|^2 \,\mathrm{d}u \biggr]^{{1}/{2}},
\end{eqnarray}
where the last factor is arbitrarily small for sufficiently large $n$.

(i) $\Rightarrow$ (iv):
Step 1. We assume first that (\ref{smoothness-assumption}) holds for
$(\xi,f)$.
In the following, we use the notation $\mathbf{f}(r):=f
(r,X_r,Y_r,\bar Z_r)$. Then
equation (\ref{Z-limit}) allows us to write
\begin{eqnarray*}
&& \biggl\llVert \int_\mathbb{R} (Z_{t,x}-Z_{s,x}
)h(x)\mu (\mathrm{d}x)\biggr\rrVert
\\
&&\quad\le \biggl\|\int_\mathbb{R}(\mathbb{E}_t \mathcal
{D}_{t,x}Y_{r_k}- \mathbb{E}_s
\mathcal{D}_{s,x}Y_{r_k})h(x)\mu (\mathrm{d}x) \biggr\|
\\
&&\quad\quad {}+ \biggl\| \int_\mathbb{R} \biggl[\mathbb{E}_t\int
_t^{r_k} \mathcal{D}_{t,x}
\mathbf{f}(r)\,\mathrm{d}r- \mathbb{E}_s \int_s^{r_k}
\mathcal{D}_{s,x}\mathbf{f}(r)\,\mathrm{d}r \biggr]h(x)\mu (\mathrm{d}x) \biggr\|
\\
&&\quad \le \| \mathbb{E}_t \mathcal{D}_{t,\cdot}Y_{r_k}-
\mathbb {E}_s\mathcal{D}_{s,\cdot}Y_{r_k}
\|_{\LLL_2
(\mathbb{P}\otimes\mu)} \|h\|_{\LLL_2(\mu)}
\\
&&\qquad{}+ \biggl\| \int_\mathbb{R} \biggl[ \mathbb{E}_t \int
_t^{r_k}\mathcal{D}_{t,x}
\mathbf{f}(r)\,\mathrm{d}r- \mathbb{E}_s \mathbb{E}_t\int
_t^{r_k}\mathcal{D}_{t,x}\mathbf{f}(r)\,\mathrm{d}r
\biggr]h(x)\mu(\mathrm{d}x) \biggr\|
\\
&&\qquad {}+ \biggl\| \int_\mathbb{R}\mathbb{E}_s \int
_s^t \mathcal {D}_{s,x}
\mathbf{f}(r)\,\mathrm{d}r h(x)\mu(\mathrm{d}x) \biggr\|,
\end{eqnarray*}
where we have used that $\mathcal{D}\mathbf{f}(r) $ can be chosen to
be constant on
${]r_{k-1},r_k \wedge r[}$ that is,
we may exchange $\mathcal{D}_{s,x}\mathbf{f}(r)$ by $\mathcal
{D}_{t,x}\mathbf{f}(r)$ in the second
term.

From Lemma \ref{Hsmoothlemma}, we obtain that
\[
\| \mathbb{E}_t \mathcal{D}_{t,\cdot}Y_{r_k}-
\mathbb{E}_s\mathcal {D}_{s,\cdot}Y_{r_k}
\|^2_{\LLL_2(\mathbb{P}
\otimes\mu)} \le4 \int_s^t
\frac{ \|Y_{r_k}- \mathbb{E}_rY_{r_k}
\|^2}{
(r_k-r)^2} \,\mathrm{d}r.
\]
We combine (\ref{Df-estimate}) with (\ref{DYandDbarZ-estimate}) to get
%
\begin{equation}
\label{Df-estmated-again}
\bigl\| \mathcal{D}_{u, \cdot}\mathbf{f}(r) \bigr\|^2_{\LLL_2(\mathbb
{P}\otimes\mu)}
\le c\bigl( L_f, \mu(\mathbb{R} ), \kappa' \bigr)
\frac{ \|Y_{r_k}- \mathbb{E}_rY_{r_k} \|^2}{(r_k-r)^2},
\end{equation}
which is used to estimate
\begin{eqnarray*}
&& \biggl\| \int_\mathbb{R}\mathbb{E}_s\int
_s^t \mathcal {D}_{s,x}
\mathbf{f}(r)\,\mathrm{d}r h(x)\mu(\mathrm{d}x) \biggr\|\\
&&\quad\le \|h\|_{\LLL_2(\mu)} \int
_s^t \bigl\| \mathcal{D}_{s,\cdot
}
\mathbf{f}(r) \bigr\|_{\LLL_2(\mathbb{P}
\otimes\mu)} \,\mathrm{d}r
\\
&&\quad\le \|h\|_{\LLL_2(\mu)} \sqrt{c\bigl( L_f, \mu(\mathbb{R}),
\kappa'\bigr) } \int_s^t
\frac{ \| Y_{r_k}- \mathbb{E}_r Y_{r_k} \| }{ r_k-r}\,\mathrm{d}r.
\end{eqnarray*}

From Lemma \ref{savinglemma}, we conclude that
\[
\int_\mathbb{R}\mathbb{E}_t \mathcal{D}_{t,x}
\mathbf{f}(r) h(x)\mu(\mathrm{d}x) = \frac{ \mathbb{E}_t  [ I_1( \mathbh{1}
_{]t,r]}h)\mathbf{f}(r) ] }{r-t}.
\]
Applying the Clark--Ocone--Haussmann formula (\ref{COH}) and (\ref{Df-estmated-again}) yields
\begin{eqnarray*}
&& \biggl\| \int_\mathbb{R}\mathbb{E}_t
\mathcal{D}_{t,x} \mathbf {f}(r) h(x)\mu(\mathrm{d}x) - \mathbb{E}_s
\int_\mathbb{R}\mathbb{E}_t \mathcal{D}_{t,x}
\mathbf{f}(r) h(x)\mu(\mathrm{d}x) \biggr\|^2
\\[1pt]
&&\quad = \frac{1}{(r-t)^2} \biggl\| \int_{]s,t] \times\mathbb{R}} \phantom{)}^p
\bigl[ \mathcal{D}_{u,y} \mathbb{E}_t \bigl(
I_1( \mathbh {1}_{]t,r]}h) \mathbf{f}(r) \bigr)
\bigr]M(\mathrm{d}u ,\mathrm{d}y )\biggr\|^2
\\[1pt]
&&\quad \le \frac{1}{(r-t)^2}\int_s^t \!\!\int
_\mathbb{R}\mathbb{E} \bigl| \mathbb{E}_t \bigl[
I_1( \mathbh{1}_{]t,r]}h) \mathcal{D}_{u,y}
\mathbf{f}(r) \bigr] \bigr|^2 \mathbh{m}(\mathrm{d}u ,\mathrm{d}y )
\\[1pt]
&&\quad \le\frac{1}{r-t} \| h\|^2_{\LLL_2(\mu)} \int
_s^t \bigl\| \mathcal{D}_{u,\cdot}\mathbf{f}
(r)\bigr\|^2_{ \LLL_2(\mathbb{P}\otimes\mu)} \,\mathrm{d}u
\\[1pt]
&&\quad \le \frac{1}{r-t} \| h\|^2_{\LLL_2(\mu)} c\bigl(
L_f, \mu (\mathbb{R}), \kappa' \bigr) \int
_s^t \frac{ \| Y_{r_k}- \mathbb{E}_r Y_{r_k} \|^2 }{ ( r_k-r)^2}\,\mathrm{d}u.
\end{eqnarray*}
For the first inequality, we have used that for $u<t<r$ it holds
$\mathbb{P}\otimes\mathbh{m}$-a.e.
\[
\mathcal{D}_{u,y} \bigl[ I_1( \mathbh{1}_{]t,r]}h)
\mathbf{f}(r) \bigr] = I_1( \mathbh{1}_{]t,r]}h)
\mathcal{D}_{u,y}\mathbf{f}(r)
\]
since $ \mathcal{D}_{u,y} I_1( \mathbh{1}_{]t,r]}h)=0$. This can be
proved, for
example, applying \cite{GL}, Corollary 3.1,  and approximation.
Hence,
\begin{eqnarray*}
&& \Biggl\| \int_t^{r_k} \biggl[ \int
_\mathbb{R}\mathbb{E}_t \mathcal{D}_{t,x}
\mathbf{f}(r) h(x)\mu(\mathrm{d}x) - \mathbb{E}_s \int_\mathbb{R}
\mathbb{E}_t \mathcal {D}_{t,x}\mathbf{f}(r) h(x)\mu(\mathrm{d}x)
\biggr] \,\mathrm{d}r \Biggr\|
\\
&&\quad\le \| h\|_{\LLL_2(\mu)} \sqrt{c\bigl( L_f, \mu(\mathbb{R}),
\kappa' \bigr) } \int_t^{r_k}
\frac{ \| Y_{r_k}- \mathbb{E}_r Y_{r_k} \| }{ (r_k-r) \sqrt{r-t}}\,\mathrm{d}r \sqrt{t-s}.
\end{eqnarray*}
Consequently, we infer
%
\begin{eqnarray}
 &&\biggl\llVert \int_\mathbb{R}
(Z_{t,x}-Z_{s,x} )h(x)\mu (\mathrm{d}x)\biggr\rrVert
^2_{\LLL_2}
\nonumber
\\
&&\label{Z-difference-summary}\quad\le \| h\|^2_{\LLL_2(\mu)} c\bigl( L_f, \mu(
\mathbb{R}), \kappa ' \bigr) \biggl[\int_s^t
\frac{ \|Y_{r_k}- \mathbb{E}_rY_{r_k} \|^2}{ (r_k-r)^2} \,\mathrm{d}r
\\
&&\hspace*{105pt}\qquad{}+ \biggl( \int_t^{r_k} \frac{ \| Y_{r_k}- \mathbb{E}_r Y_{r_k} \|
}{ (r_k-r)
\sqrt{r-t}}\,\mathrm{d}r
\biggr)^2 (t-s) \biggr].\nonumber
\end{eqnarray}
Obviously (\ref{exp-of-Y}) implies
\[
\int_s^t \frac{ \|Y_{r_k}- \mathbb{E}_rY_{r_k} \|^2}{ (r_k-r)^2} \,\mathrm{d}r \le
c_1 \int_s^t(r_k-r)^{\theta_k-2}
\,\mathrm{d}r
\]
and
\begin{eqnarray*}
\biggl( \int_t^{r_k} \frac{ \| Y_{r_k}- \mathbb{E}_r Y_{r_k} \| }{ (r_k-r)
\sqrt{r-t}}\,\mathrm{d}r
\biggr)^2 &\le& c_1 \biggl( \int_0^1
(1-u)^{({\theta_k}/{2})-1} u^{ -{1}/{2}} \,\mathrm{d}u \biggr)^2
(r_k-t)^{\theta_k-1}
\\
&=& c_1 B^2 \biggl(\frac{\theta_k}{2}, \frac{1}{2}
\biggr) (r_k-t)^{\theta_k-1},
\end{eqnarray*}
where $B$ denotes the beta function.
For $\theta_k <1$ one can see that for all $s,t \in\,]r_{k-1}, r_k[$
with $s<t$ it holds
\begin{eqnarray*}
(r_k-t)^{\theta_k-1}(t-s) &\le& \frac{r_k-r_{k-1}}{1-\theta_k} \bigl(
(r_k-t)^{\theta_k-1} - (r_k-s)^{\theta_k-1} \bigr)
\\
& = & ( r_k-r_{k-1}) \int_s^t(r_k-r)^{\theta_k-2}
\,\mathrm{d}r
\end{eqnarray*}
since this inequality is equivalent to
\[
t-s:= \varepsilon(r_k-s) \le\frac{r_k-r_{k-1}}{1-\theta_k} \bigl[ 1- (1-
\varepsilon )^{1-\theta_k} \bigr]
\]
for $\varepsilon\in\,]0,1[$ and $s \in\,]r_{k-1}, r_k[$, and the last
inequality can be proved easily. For $\theta_k = 1$ we have
\[
(r_k-t)^0(t-s) \le \int_s^t
\frac{r_k-r_{k-1}}{r_k-r } \,\mathrm{d}r.
\]

Summarizing we get the assertion with
\[
c_4 = c_1 c\bigl( L_f, \mu(\mathbb{R}),
\kappa' \bigr) \biggl(1+ B^2 \biggl(\frac{\theta
_k}{2},
\frac{1}{2} \biggr) ( r_k-r_{k-1}) \biggr).
\]

Step 2.
Now we take the sequence $(\xi^n, f^n)_n$ from step 2 of the
implication $\mbox{(i)} \Rightarrow \mbox{(iii)}$
and proceed with (\ref{Z-difference-summary}) in the same way as we did
with (\ref{Z-all-summands}). In order to get the analogous estimate, we
use the relations
\begin{eqnarray*}
&&\int_s^t \frac{ \| \int_r^{r_k} f(u,X_u,Y_u,\bar Z_u ) -
f(u,X_u,Y^n_u,\bar Z^n_u ) \,\mathrm{d}u  \|^2 }{ (r_k-r)^2} \,\mathrm{d}r
\\
&&\quad\le \int_s^t \frac{ 1 }{ r_k-r} \,\mathrm{d}r \int
_{r_{k-1}}^{r_k} \bigl\| f(u,X_u,Y_u,
\bar Z_u ) - f\bigl(u,X_u,Y^n_u,
\bar Z^n_u \bigr) \bigr\|^2 \,\mathrm{d}u
\end{eqnarray*}
which is arbitrarily small for fixed $s, t\in[r_{k-1},r_k[ \setminus
N_k$ where $\lambda(N_k)=0$ and large $n \in\mathbb{N}$,
and
\begin{eqnarray*}
&& \int_t^{r_k} \frac{ \int_r^{r_k} \| f(u,X_u,Y_u,\bar Z_u ) -
f(u,X_u,Y^n_u,\bar Z^n_u)\| \,\mathrm{d}u }{ (r_k-r) \sqrt{r-t}}\,\mathrm{d}r
\\
&&\quad\le \frac{2 \int_t^{r_k} \| f(u,X_u,Y_u,\bar Z_u ) -
f(u,X_u,Y^n_u,\bar Z^n_u)\| \,\mathrm{d}u}{r_k-t} \int_t^{(r_k+t)/2}
\frac{ 1}{
\sqrt{r-t}}\,\mathrm{d}r
\\
&&\qquad{}+ \frac{\sqrt{2}}{ \sqrt{r_k-t}} \int_{(r_k+t)/2}^{r_k}
\frac{
\int_r^{r_k} \| f(u,X_u,Y_u,\bar Z_u ) - f(u,X_u,Y^n_u,\bar Z^n_u)\| \,\mathrm{d}u }{
r_k-r}\,\mathrm{d}r.
\end{eqnarray*}
\end{longlist}
For the last term, we can apply the estimate (\ref{arbitrarily-small})
to see that the RHS is arbitrarily small for large $n \in\mathbb{N}$.
\end{pf*}

\section{A sufficient condition on \texorpdfstring{$\xi$}{xi} for fractional smoothness}\label{sec6}

Assume ($A_f$) is satisfied for (\ref{beq2}) and $\xi\in\mathbb{H}$.
If $k=m$,
condition (i) of Theorem \ref{mainthm} means in fact
\[
\llVert \xi-\mathbb{E}_{s}\xi\rrVert ^2\leq
c_1(T-s)^{\theta_m},\qquad  s \in \,]r_{m-1},T].
\]
Following the ideas of \cite{GGG}, we will formulate a condition on
$\xi\in\mathbb{H}$ which implies that
(\ref{exp-of-Y}) of Theorem \ref{mainthm} holds for all $k \in\{
1,\ldots,m\}$.

Assume that $\check X$ and $X$ are processes on $(\Omega,\mathcal
{F},\mathbb{P})$ such that $\check X$ is an independent copy of
the L\'evy process $X$. We define for $0 \le t<r \le T$
%
\begin{equation}
\label{xtr}
X^{t,r}_s:=\int_0^s
\mathbh{1}_{{[0,T]}\setminus{]t,r]}}(u)\,\mathrm{d}X_u+\int_0^s
\mathbh{1} _{{]t,r]}}(u)\,\mathrm{d}\check X_u,\qquad s\in{[0,T]},
\end{equation}
that is, we obtain the process $X^{t,r}$ from $X$ by replacing it on
the interval ${]t,r]}$ by its independent copy.
Consequently, for the random measure $M^{t,r}$ w.r.t. $X^{t,r}$ we have
the relation
\[
M^{t,r}(B) = M \bigl(B\setminus \bigl({]t,r]}\times\mathbb{R} \bigr) \bigr) +
\check M \bigl(B\cap \bigl({]t,r]}\times\mathbb{R} \bigr) \bigr),\qquad B\in\mathcal {B}
\bigl({[0,T]}\times\mathbb{R}\bigr).
\]

By $ (\hat\mathcal{F}_t )_{t\in[0,T]}$ we denote the augmented natural
filtration w.r.t.~$(X,\check X)$ and put
$\LLL_2:= \LLL_2(\Omega$, $\hat\mathcal{F}_T,\mathbb
{P})$ (the notation $ (\mathcal{F}_t )_{t\in
[0,T]}$ we keep for the augmented natural filtration w.r.t.~$X$).

For symmetric $ f_n \in\LLL_2^n$ it holds
%
\begin{eqnarray}
\label{chaos-comperation}
\bigl\|I^{t,r}_n(f_n)-I_n(f_n)
\bigr\|^2=2n!\bigl\llVert f_n (1-\mathbh{1}_{
 (
([0,T]\setminus]t,r] )\times\mathbb{R} )^n} )
\bigr\rrVert ^2_{\LLL_2^n}.
\end{eqnarray}

For any $\eta\in\LLL_2$ given by $\eta=\sum_{n=0}^\infty I_n (f_n)$,
we define
\[
\eta^{t,r}:=\sum_{n=1}^\infty
I^{t,r}_n (f_n).
\]
%
\begin{thm}\label{suffcond}
Assume that $\xi\in\mathbb{H}$ and ($A_f$) is satisfied for (\ref
{beq2}). If there exist constants $c>0$ and $\theta_k \in\,]0,1]$ such that
\[
\bigl\|\xi-\xi^{t,r_k} \bigr\|^2\leq c(r_k-t)^{\theta_k}
\qquad\mbox{for all } t\in [r_{k-1},r_k]
\]
then
\[
\|Y_{r_k}-\mathbb{E}_tY_{r_k}\|^2
\le C(r_k-t)^{\theta_k} \qquad\mbox{for all } t\in [r_{k-1},r_k].
\]
\end{thm}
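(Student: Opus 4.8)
The plan is to reduce the claim, via the chaos identity \eqref{chaos-comperation}, to a stability estimate for the BSDE under the operation of resampling the driving noise on the window $]t,r_k]$, and then to compare the original and the resampled equations on the enlarged filtration $(\hat\F_t)$.

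First I would note that, by Theorem \ref{Hthm}, $Y_{r_k}\in\mathbb{H}_{r_k}$, so in the chaos expansion $Y_{r_k}=\sum_n I_n(c_n)$ the kernels $c_n$ vanish, in the time variables, off $]0,r_k]^n$. On that support the set $[0,T]\setminus]t,r_k]$ coincides with $[0,t]$, so \eqref{chaos-comperation}, summed over $n$ (the chaoses being orthogonal), gives, with $Y_{r_k}^{t,r_k}:=\sum_n I_n^{t,r_k}(c_n)$ the resampling of $Y_{r_k}$,
\[
\|Y_{r_k}-\E_tY_{r_k}\|^2=\tfrac12\,\|Y_{r_k}-Y_{r_k}^{t,r_k}\|^2 .
\]
(Applied to $\xi$ the same identity shows only that the hypothesis is \emph{weaker} than the conclusion, which is why the BSDE must enter.) It thus suffices to bound $\|Y_{r_k}-Y_{r_k}^{t,r_k}\|^2$ by $C(r_k-t)^{\theta_k}$.

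Next I would identify $Y_{r_k}^{t,r_k}$ with the time-$r_k$ value of the solution $(Y',Z')$ of the BSDE \eqref{beq2} with data $(\xi^{t,r_k},f(\cdot,X^{t,r_k}_\cdot,\cdot,\cdot))$ driven by $M^{t,r_k}$, set up on the augmented natural filtration of $X^{t,r_k}$. The point is that $\xi^{t,r_k},X^{t,r_k},M^{t,r_k}$ arise from $\xi,X,M$ by the measure-preserving transformation that exchanges $X$ with its independent copy $\check X$ on $]t,r_k]$; this map sends $\mathbb{F}$ to the filtration of $X^{t,r_k}$, intertwines $\int\!\cdot\,dM$ with $\int\!\cdot\,dM^{t,r_k}$, and commutes with the $ds$-integral and with composition with the continuous function $f$. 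Applying it to equation \eqref{beq2} for $(Y,Z)$ shows that the resampled pair solves the resampled BSDE, so by uniqueness (Theorem \ref{existence}) it equals $(Y',Z')$, and its value at $r_k$ is $Y_{r_k}^{t,r_k}$.

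Finally I would lift both equations to $(\Omega,\hat\F_T,(\hat\F_t),\PP)$: viewing $(X,\check X)$ as an $\R^2$-valued $\Ltwo$-L\'evy process with compensated measure $\hat M$, the pair $(Y,Z)$ solves \eqref{beq} driven by $\hat M$ with terminal $\xi$, generator $f(s,X_s,\cdot,\cdot)$ and zero $\check X$-integrand, while $(Y^{t,r_k},Z^{t,r_k})$ solves it with terminal $\xi^{t,r_k}$, generator $f(s,X^{t,r_k}_s,\cdot,\cdot)$ and integrand supported off, respectively on, $]t,r_k]$ in the two coordinates. Since Theorem \ref{continuitythm} holds verbatim in this (two-dimensional) setting, it yields
\[
\|Y-Y^{t,r_k}\|_S^2\le C(L_f,\mu(\R),\kappa')\Big(\|\xi-\xi^{t,r_k}\|^2+\int_0^T\|f(u,X_u,Y_u,\bar Z_u)-f(u,X^{t,r_k}_u,Y_u,\bar Z_u)\|^2\,du\Big).
\]
By the Lipschitz property $(A_f)$ the integrand is at most $L_f^2\|X_u-X^{t,r_k}_u\|^2$, and since the drift $\gamma u$ cancels in $X_u-X^{t,r_k}_u$, one has $\|X_u-X^{t,r_k}_u\|^2=2\mu(\R)\,\lambda(]t,r_k]\cap[0,u])\le 2\mu(\R)(r_k-t)$. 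Using the hypothesis $\|\xi-\xi^{t,r_k}\|^2\le c(r_k-t)^{\theta_k}$ together with $r_k-t\le T$ and $\theta_k\le 1$, this gives $\|Y_{r_k}-Y_{r_k}^{t,r_k}\|^2\le\|Y-Y^{t,r_k}\|_S^2\le C'(r_k-t)^{\theta_k}$, which with the reduction of the first step proves the theorem (and, combined with Theorem \ref{mainthm}, the full $\Ltwo$-regularity of $(Y,Z)$ on $]r_{k-1},r_k]$). I expect the main obstacle to be making the second step rigorous — that the BSDE solution map commutes with resampling — together with the bookkeeping that legitimises applying the stability estimate \emph{across} the two distinct driving measures $M$ and $M^{t,r_k}$ by passing to the common noise $\hat M$; the remaining ingredients are a routine a priori estimate and an increment computation.
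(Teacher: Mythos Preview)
Your proof is correct and follows the same route as the paper: reduce via \eqref{chaos-comperation} to $\|Y_{r_k}-Y_{r_k}^{t,r_k}\|^2$, identify $Y_{r_k}^{t,r_k}$ as the time-$r_k$ value of the resampled BSDE, and then apply the stability estimate together with the Lipschitz bound on $f$ and the increment estimate for $X-X^{t,r_k}$. The paper's framing is slightly leaner in two places: since $M=M^{t,r_k}$ on $]r_k,T]\times\R$, it applies Theorem \ref{continuitythm} directly on the interval $[r_k,T]$ (in the enlarged filtration) without passing to a two-dimensional driving noise; and for the chaos reduction it uses only that $Y_{r_k}$ is $\F_{r_k}$-measurable (hence its kernels vanish off $]0,r_k]^n$), so the appeal to Theorem \ref{Hthm} is not needed.
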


\begin{rem}\label{fracsmooth}
(i)
For $f=0$, it follows from Theorem \ref{suffcond} the implication
%
\begin{eqnarray}
&&\bigl\|\xi-\xi^{t,r_k} \bigr\|^2\leq
c(r_k-t)^{\theta_k}\qquad \mbox{for all } t\in [r_{k-1},r_k]
\nonumber
\\[-8pt]
\label{sufficient}
\\[-8pt]
\nonumber
&&\quad\Longrightarrow \quad \| \mathbb{E}_{r_k} \xi- \mathbb{E}_t\xi
\|^2\leq c(r_k-t)^{\theta_k}\qquad  \mbox{for all } t
\in[r_{k-1},r_k].
\end{eqnarray}
For certain $\xi$ the implication (\ref{sufficient}) is in fact an
equivalence: for example, if $\xi=g(X_{r_m}-X_{r_{m-1}},\ldots,
X_{r_1}-X_{r_0}) \in\LLL_2$
such that $g$ is a symmetric function and $r_k = \frac{kT}{m}$, for
$k=0,\ldots,m$. A more detailed discussion under which conditions
equivalence holds for (\ref{sufficient})
as well as an example where $ \| \mathbb{E}_{r_k} \xi- \mathbb
{E}_t\xi\|^2\leq
c(r_k-t)^{\theta_k}, \mbox{ for all } t\in[r_{k-1},r_k]$ does not
imply $\|\xi-\xi^{t,r_k} \|^2\leq c(r_k-t)^{\theta_k}, \mbox{ for
all }
t\in[r_{k-1},r_k] $ can be found in \cite{GeissLect}.
\begin{longlist}[(ii)]
\item[(ii)]
If $\xi\in\mathbb{H}$ the case $\Theta=(1,1,\ldots,1)$ corresponds to
Malliavin differentiability:
%
\begin{eqnarray}
&& \exists c>0\dvt  \bigl\|\xi-\xi^{t,r_k} \bigr\|^2\leq
c(r_k-t) \qquad \mbox{for all } t \in[r_{k-1},r_k],
k=1,\ldots,m
\nonumber
\\[-8pt]
\label{theta=1relation}\\[-8pt]
&&\hspace*{94pt}\nonumber\qquad\qquad\qquad\qquad\iff \quad \xi\in{\mathbb{D}_{1,2}}.
\end{eqnarray}
Indeed, using the notation of the proof of Lemma \ref{Hsmoothlemma} and
setting ${n\choose\gamma(\alpha)}:=\frac{n!}{\gamma_1(\alpha
)!\cdots
\gamma_m(\alpha)!}$ we have
\begin{eqnarray*}
\bigl\|\xi-\xi^{t,r_k} \bigr\|^2 & = & 2 \sum
_{n=1}^\infty n!\sum_{[\alpha]\in V_m^n \slash\thicksim
}
\pmatrix{n
\cr
\gamma(\alpha)} \bigl\llVert g_n^{\alpha}
\bigr\rrVert ^2_{\LLL_2(\mu^{\otimes n})}
\\
&& \hspace*{68pt}{}\times \bigl( (r_k-r_{k-1})^{\gamma_k(\alpha
)}-(t-r_{k-1})^{\gamma_k(\alpha)}
\bigr) \mathop{\prod_{1\le l\le m}}_{l \neq k}
(r_l-r_{l-1} )^{\gamma_l(\alpha)}.
\end{eqnarray*}
This implies for $r:=\frac{t-r_{k-1}}{r_k-r_{k-1}}$ and $R:=\max_{1
\le
k \le m} \frac{1}{r_k-r_{k-1}}$ that
\begin{eqnarray*}
\frac{ \|\xi-\xi^{t,r_k} \|^2}{ r_k-t } &= & \frac{2}{r_k-r_{k-1}} \sum_{n=1}^\infty
n!\sum_{[\alpha]\in V_m^n
\slash\thicksim}\pmatrix{n
\cr
\gamma(\alpha)} \bigl
\llVert g_n^{\alpha}\bigr\rrVert ^2_{\LLL_2(\mu^{\otimes n})}
\lambda ^n(\Lambda_\alpha)
\\
&& \hspace*{103pt}{}\times\mathbh{1}_{ \{ \gamma_k(\alpha)\ge1\}} \bigl( 1+r + \cdots+ r ^{\gamma
_k(\alpha)-1} \bigr)
\\
&\le& 2 R \sum_{n=1}^\infty n!\sum
_{[\alpha]\in V_m^n \slash
\thicksim} \pmatrix{n
\cr
\gamma(\alpha)} \bigl\llVert
g_n^{\alpha}\bigr\rrVert ^2_{\LLL_2(\mu^{\otimes n})} \lambda
^n(\Lambda_\alpha) \gamma_k(\alpha)
\\
&\le& 2 R \| \mathcal{D}\xi\|^2_{\mathbb{P}\otimes\mathbh{m}}
\end{eqnarray*}
since $\gamma_k(\alpha) \le n$.
On the other hand, we get because of $n= \sum_{k=1}^m \gamma_k(\alpha)$
for $\alpha\in V_m^n$ from the above relation
that
\begin{eqnarray*}
\| \mathcal{D}\xi\|^2_{\mathbb{P}\otimes\mathbh{m}} &=& \sum
_{k=1}^m \sum_{n=1}^\infty
n!\sum_{[\alpha]\in V_m^n \slash
\thicksim
}\pmatrix{n
\cr
\gamma(\alpha)} \bigl
\llVert g_n^{\alpha}\bigr\rrVert ^2_{\LLL_2(\mu^{\otimes n})}
\lambda ^n(\Lambda_\alpha) \gamma_k(\alpha)
\\
&\le& \frac{T}{2} \sup_{1 \le k \le m} \sup_{r_{k-1} < t <r_k}
\frac{
\|\xi-\xi^{t,r_k} \|^2}{ r_k-t }.
\end{eqnarray*}
In \cite{GeissLect}, there is an example which shows that (\ref
{theta=1relation}) is not necessarily true without assuming $\xi\in
\mathbb{H}$.
\end{longlist}
\end{rem}

\begin{example}\label{ex63}
If $X$ is any square integrable L\'evy process it
holds for $\xi:= \mathbh{1}_{]K, \infty[}(X_1)$ with $K \in\mathbb{R}$
that
\[
\xi\in{\mathbb{D}_{1,2}} \quad \iff \quad \sigma=0\quad \mbox{and}\quad \int
_{\{|x|
\le1\}} |x| \,\mathrm{d}\nu (x) < \infty
\]
(see \cite{Laukkarinen}, Example 3.1).
If $X$ is a tempered $\alpha$-stable process given by the L\'evy measure
\[
\nu_\alpha(\mathrm{d}x) = \frac{d}{|x|^{1+\alpha}} (1+|x|)^{-m} \mathbh
{1}_{\{x\neq
0\}} \,\mathrm{d}x,
\]
where $d>0$, $\alpha\in\,]0,2[$ and $m \in\,]2-\alpha, \infty[$, it follows
from \cite{GGL}, Section~4.2, that
\[
\xi\in\mathbb{B}_{2,\infty}^{{1}/{2}} := (\LLL_2,{
\mathbb {D}_{1,2}})_{{1}/{2},\infty},
\]
that is, (see Remarks \ref{frac-smooth-comment}(i)
and \ref{fracsmooth}(i)) there exists a $c>0$:
\[
\bigl\|\xi-\xi^{t,1} \bigr\|^2\leq c(1-t)^{{1}/{2}} \qquad \mbox{for
all } t\in[0,1].
\]
Consequently, for any $\alpha\in[1,2[$ the above $\xi$ is in
$\mathbb
{B}_{2,\infty}^{{1}/{2}}$ but not in ${\mathbb{D}_{1,2}}$.
\end{example}

\begin{pf*}{Proof of Theorem \ref{suffcond}}
If $(Y,Z)$ is a solution of (\ref{beq2}), then $(Y^{t,r},Z^{t,r})$ solves
\[
\mathcal{Y}_u=\xi^{t,r}+\int_u^T
f \bigl(s,X^{t,r}_{s}, \mathcal{Y}_{s}, \bar
\mathcal{Z}_s \bigr)\,\mathrm{d}s-\int_{{]u,T]}\times\mathbb{R}}
\mathcal{Z}_{s,x}M^{t,r}(\mathrm{d}s,\mathrm{d}x).
\]
From (\ref{chaos-comperation}), we conclude that
\[
\bigl\| \mathbb{E}_r I^{t,r}_n(f_n)-
\mathbb{E}_r I_n(f_n)\bigr\|^2=2 \bigl\|
\mathbb {E}_t I_n(f_n) -
\mathbb{E}_r I_n(f_n) \bigr\|^2.
\]
Since $Y_{r_k}$ is $\mathcal{F}_{r_k}$-measurable this implies for $t
\in\,]r_{k-1},r_k[ $ that
\[
2 \|Y_{r_k}-\mathbb{E}_{t}Y_{r_k}
\|^2 = \bigl\|Y_{r_k}-Y^{t,r_k}_{r_k}
\bigr\|^2.
\]
Since $M$ and $M^{t,r_k}$ coincide on ${]r_k,T]}\times\mathbb{R}$ we have
\begin{eqnarray*}
Y_{r_k}-Y^{t,r_k}_{r_k} & = & \xi-\xi^{t,r_k}
\\
&&{}+ \int_{r_k}^T f (s,X_{s},
Y_{s},\bar Z_s)- f\bigl(s,X^{t,r_k}_{s},
Y^{r_k}_{s},\bar Z^{t,r_k}_s\bigr)\,\mathrm{d}s
\\
&&{}-\int_{{]r_k,T]}\times\mathbb{R}} \bigl(Z_{s,x}-Z^{t,r_k}_{s,x}
\bigr) M(\mathrm{d}s,\mathrm{d}x).
\end{eqnarray*}
By Theorem \ref{continuitythm}, we get
\begin{eqnarray*}
&&\mathbb{E}\bigl\llvert Y_{r_k}-Y^{t,r_k}_{r_k}\bigr
\rrvert ^2+\mathbb{E}\int_{]r_k,T] \times
\mathbb{R}} \bigl\llvert
Z_{s,x}-Z^{t,r_k}_{s,x}\bigr\rrvert ^2
\mathbh{m}(\mathrm{d}s,\mathrm{d}x)
\\
&&\quad\le C\biggl(\mathbb{E}\bigl\llvert \xi-\xi^{t,r_k}\bigr\rrvert
^2
+\mathbb{E}\int_{r_k}^T \bigl| f
(s,X_{s}, Y_{s},\bar Z_s)- f
\bigl(s,X^{t,r_k}_{s}, Y_{s},\bar Z_s
\bigr) \bigr|^2\,\mathrm{d}s\biggr),
\end{eqnarray*}
which can be reduced by the Lipschitz property of $f$ to
\begin{eqnarray*}
&&\mathbb{E}\bigl\llvert Y_{r_k}-Y^{t,r_k}_{r_k}\bigr
\rrvert ^2+\mathbb{E}\int_{{]r_k,T]}\times\mathbb{R}} \bigl\llvert
Z_{s,x}-Z^{t,r_k}_{s,x}\bigr\rrvert ^2
\mathbh{m}(\mathrm{d}s,\mathrm{d}x)
\\
&&\quad\leq C \biggl(\mathbb{E}\bigl\llvert \xi-\xi^{t,r_k}\bigr\rrvert
^2+\mathbb {E}\int_{r_k}^T
L_f^2\bigl\llvert X_s-X^{t,r_k}_{s}
\bigr\rrvert ^2\,\mathrm{d}s \biggr).
\end{eqnarray*}
By definition of $X^{t,r_k}$ in (\ref{xtr}), we get for $s> r_k$
\[
\mathbb{E}\bigl\llvert X_s-X^{t,r_k}_s\bigr\rrvert ^2=\mathbb{E}\bigl\llvert X_{r_k}- X_t
+ (\check X_{r_k} -\check X_t )\bigr\rrvert ^2
=C_1(r_k-t).
\]
Thus, there is a constant $\tilde C$ such that
\begin{eqnarray*}
&&\mathbb{E}\bigl\llvert Y_{r_k}-Y^{t,r_k}_{r_k}\bigr
\rrvert ^2+\mathbb{E}\int_{{]r_k,T]}\times\mathbb{R}} \bigl\llvert
Z_{s,x}-Z^{t,r_k}_{s,x}\bigr\rrvert ^2
\mathbh{m}(\mathrm{d}s,\mathrm{d}x)
\\
&&\quad\leq C\mathbb{E}\bigl\llvert \xi-\xi^{t,r_k}\bigr\rrvert ^2+
\tilde C (r_k-t),
\end{eqnarray*}
which implies the assertion.
\end{pf*}
%

\section{Concluding remarks}\label{sec7}
\begin{enumerate}[4.]
\item[1.] The assumption that the L\'evy process $X$ is square integrable
could be avoided by using a more general formulation
of the Clark--Ocone--Haussmann formula and modifying the dependency of
the generator $f(t,X_t,Y_t, \bar Z_t)$ on $X_t$. (If $X$ is not square
integrable,
$X_t$ does not belong to ${\mathbb{D}_{1,2}}$.)
\item[2.] A generalization to the setting of a $d$-dimensional L\'evy
process seems to be possible as well and similar results might be
expected. For example, for a multidimensional L\'evy process without
Brownian part, a chaos decomposition and a Clark--Ocone--Haussmann
formula can be found
in \cite{LastPenrose} and \cite{LastPenrose2}. This could be extended
to general L\'evy processes.
\item[3.] In this paper, the dependency of the driver with respect to the
$Z$ process is given by the integral $\int Z_{t,x}\kappa(\mathrm{d}x)$. A
generalization to the dependency
on finitely many integrals,
\[
f \biggl(s,X_s,Y_s,\int Z_{t,x}
\kappa_1(\mathrm{d}x),\ldots,\int Z_{t,x}\kappa _n(\mathrm{d}x)
\biggr),
\]
where the variables $z_1,\ldots,z_n$ in the generator underly the same
assumptions as for one $z$-variable appears to be straightforward. Note
that the choice
$\kappa=\delta_0$ covers the case for the $Z$-variable from \cite{bbp}, for instance.
\item[4.] The investigation of the case where the terminal condition or the
generator depends on paths of a process of a L\'evy driven SDE
is of major interest for further research, as well as the extension to
assumptions beyond the Lipschitz generator setting like quadratic
drivers.
\end{enumerate}

\section*{Acknowledgement}
We would like to thank S. Geiss for fruitful discussions and valuable
suggestions and the unknown referee for his critical remarks.

Alexander Steinicke was partially supported by the project 133914
\textit{Stochastic and Harmonic Analysis, Interactions and Applications}
of the Academy of Finland.






\printhistory
\end{document}